\newtheorem{thm}{Theorem}[section]
\newtheorem{definition}[thm]{Definition}
\newtheorem{prop}[thm]{Proposition}
\newtheorem{lemma}[thm]{Lemma}
\newtheorem{example}[thm]{Example}
\newtheorem{rmk}[thm]{Remark}
\title{Homogenization of metrics in oscillating manifolds}
\author{Andrea Braides \\Dipartimento di Matematica, Universit\`a di Roma Tor Vergata
\\ via della ricerca scientifica 1, 00133 Roma, Italy\\ \\ Andrea Cancedda and Valeria Chiad\`o Piat\\ Dipartimento di Matematica, Politecnico di Torino \\  corso Duca degli Abruzzi 24, 10129 Torino, Italy
}
\date{}                                           
\def\e{\varepsilon}
\newcommand{\Rn}[1]{\mathbb{R}^{#1}}
\newcommand{\eps}{\epsilon}
\newcommand{\gammalim}[1]{\Gamma\hbox{-}\lim_{\eps \to 0}\ {#1}}
\newcommand{\weakconv}{\rightharpoonup}
\newcommand{\ueps}{u_{\eps}}
\newcommand{\veps}{v_{\eps}}
\newcommand{\Feps}{F_{\eps}}
\newcommand{\psihom}{\psi_{{\rm hom}}}
\def\eps{\varepsilon}
\begin{document}


\maketitle
\abstract{\noindent We consider energies defined as the Dirichlet integral
of curves taking values in fast-oscillating manifolds converging
to a linear subspace.
We model such manifolds as subsets of $\Rn{m+m'}$
described by a constraint $(x_{m+1},\ldots,x_{m'})= \delta \,
\varphi(x_1/\e,\ldots, x_m/\e)$ where $\e$ is the period of the oscillation,
$\delta$ its amplitude and $\varphi$ its profile. The interesting case
is $\e<\!<\delta<\!<1$, in which the limit of the energies
is described by a Finsler metric on $\Rn{m}$ which is defined by
optimizing the contribution of oscillations on each level set $\{\varphi=c\}$.
The formulas describing the limit mix homogenization and convexification
processes, highlighting a multi-scale behaviour of optimal sequences.
We apply these formulas to show that we may obtain all (homogeneous)
symmetric Finsler metrics larger than the Euclidean metric as limits in 
the case of oscillating surfaces in $\Rn{3}$.}

\section{Introduction}\label{sec intro}
The object of this paper is the asymptotic analysis of integral problems
with oscillating constraints.  There is a wide literature concerning homogenization problems for singular structures and for functions defined on networks and periodic manifolds (see, e.g., \cite{A-B-CP,BC,BF,P-Z06,R04,Z}). In most of those problems the geometric complexity is in the domain of definition, and the functions are considered as traces of functions defined on the whole space or as limits of functions defined on full-dimensional sets as those sets tend to a lower dimensional (possibly multidimensional) structure. In our case we take into account similar geometries, but the geometrical complexity is in the codomain, as we consider instead functions with values in a periodic manifold, and we analyze the behavior of the corresponding energies as the geometry of the target manifold gets increasingly oscillating. Our results focus on the behaviour of energies defined on functions constrained to take their values on manifolds $V_{\eps}$ with a finely oscillating geometry, as these manifolds converge to a smoother manifold $V$ as $\eps \to 0$.
Homogenization problems with a fixed target manifold $V$ have been considered in \cite{BM2, BM}.

Since we are interested in highlighting the effects of the constraint, we will focus on a prototypical energy functional; i.e, the Dirichlet integral. Namely, for $u:\Omega \subset \Rn{n} \to V_{\eps}$ we will consider
$$
\Feps(u)=	\begin{cases}
      		\displaystyle \int_{\Omega} |\nabla u|^2 dx	& u \in H^1(\Omega;V_{\eps}), \cr
      		+\infty					& \text{otherwise}.
\end{cases}
$$
We suppose that the limit $V$ is a smooth $m$-dimensional manifold and the oscillating $V_\e$ are manifolds of the same dimension lying in a tubular neighbourhood of $V$ with vanishing radius as $\e\to0$. The asymptotic description will be given in terms of the computation of the $\Gamma$-limit of $\Feps$ \cite{GCB,DM,HB}.

We will treat the {\em cartesian case}\/; i.e., when the manifolds $V_\e$ can be seen as
graphs of functions defined on $V$ (the latter identified with $\Rn{m}$). More precisely,
we suppose that there exist functions $\varphi_{\eps}: \Rn{m} \to \Rn{m'}$ such that
\begin{equation}\label{cartes}
V_{\eps}=\left\{ (x,\varphi_{\eps}(x)): x \in \Rn{m} \right\} \subseteq \Rn{m+m'}.
\end{equation}
Hence the assumption that $V_{\eps}$ converges to $V$ as $\eps \to 0$ is translated into
$$\lim_{\eps \to 0} \|\varphi_\eps\|_\infty = 0.
$$
This description can be thought as a local picture of the more general case, where $V$ is not necessarily a hyperplane and by a localization and blow-up argument we can consider the tangent space to $V$ at some point $X$ in place of $V$ in the model that we analyze.

Our modeling assumption is that the description of the oscillations of $\varphi_\e$ is obtained through a single periodic function $\varphi: \Rn{m} \to \Rn{m'}$ satisfying
\begin{enumerate}
  \item $\varphi: \Rn{m} \to \Rn{m'}$ is $(0,1)^m $-periodic;
  \item $\displaystyle\varphi_\eps(x)=\delta\, \varphi\Bigl({x\over\eps}\Bigr)$, with $\delta=\delta_\eps \to 0$ as $\eps \to 0$.
\end{enumerate}

In such a setting a function $u \in H^1(\Omega;V_{\eps})$ can be rewritten as
\[
u(x)=(u_1(x),u_2(x)),
\]
with $u_1:\Omega \to \Rn{m}$ and
$$
u_2(x)= \varphi_\eps(u_1(x)).
$$
Hence, we can write $\Feps$ without the constraint $u\in V_\e$, in terms of $u_1$ as
\begin{eqnarray*}
\Feps(u) &= &\int_{\Omega} |\nabla u|^2 dx = \int_{\Omega} |\nabla u_1|^2 dx + \int_{\Omega} |\nabla \varphi_{\eps} (u_1)|^2 dx
\\
&=&\int_{\Omega} |\nabla u_1|^2 dx + \left( \frac{\delta}{\eps}\right)^2\int_{\Omega} |\nabla \varphi\Bigl({u_1\over\e}\Bigr) \nabla u_1|^2 dx.
\end{eqnarray*}

The coefficient in front of the second term in this last expression suggests three different behaviors depending on the scale of the coefficient $\delta$:

\begin{enumerate}
  \item $\delta/\eps \to 0$. In this case the homogenization becomes trivial, the second term can be neglected and the $\Gamma$-limit is just the Dirichlet integral of the function $u_1$, which in particular is independent of the constraint and the function $\varphi$;

  \item $\delta/\eps \to c\in (0,+\infty)$. In this case by a comparison argument we can actually suppose that $\delta/\eps=c$ and consider the energy density
$$
f_c(v,\xi)=  |\xi|^2 + c^2 |\nabla \varphi (v ) \xi|^2
$$
so that, with a slight abuse of notation,
$$
\Feps(u)= \Feps(u_1)=\int_{\Omega} f_c\Bigl( {u_1\over\e},\nabla u_1\Bigr)dx.
$$
Since $f_c$ is periodic and satisfies a standard growth condition
the homogenization of these energies can be then performed
by using general almost-periodic homogenization theorems (see \cite{BDF} Chapter 15);

  \item $\delta/\eps \to +\infty$. This is the new and interesting case when the energy  density of $F_\e$ does not satisfy standard growth conditions and we cannot use known results. The fact that the coefficient of the second term blows up as $\eps \to 0$, suggests that the behavior of the homogenized functional is related to conditions that make the second integral negligible as $\e\to 0$. Upon scaling the variable, this leads to the condition that $u_1$ make $\nabla_y \varphi (u_{1})$ almost zero; i.e. $u_1$ is very close to lying on a level set of $\varphi$ (at least locally). Therefore, the $\Gamma$-limit will strongly depend on the geometry of the constraint, in particular on the level sets of $\varphi$.
\end{enumerate}

The results in this paper deal with the description of asymptotic {\em metric properties} of $V_\e$,
for which we deal with {\em curves} in $\Rn{m}$ (i.e., $n=1$). The general vectorial case $n>1$ seems to include additional effects than in the case of curves, which require the use of notions as quasiconvexity, polyconvexity and rank-1-convexity, and is beyond the scope of this work. By a sectional argument, however, the results with $n=1$ provide a lower bound for the case $n>1$.

\bigskip
In order to describe the $\Gamma$-limit in the case $\delta>\!>\e$ we have to introduce several types of homogenization. With a slight abuse of notation from now on we will directly use the variable $u$ in place of $u_1$.

First, with fixed $z$ we consider the {\em strict constraint}
\begin{equation}\label{stricco}
\varphi\Bigl({u\over \e}\Bigr)=z,
\end{equation}
which is meaningful only if $z$ is in the image of $\varphi$.
For functions satisfying this constraint the functional $F_\e(u)$ reduces to the Dirichlet integral.
Moreover, a limit of functions satisfying this constraint is not constant only if the set
\begin{equation}
L_{\varphi}^z=\{x\in \Rn{m}: \varphi(x)=z\}
\end{equation}
 contains an unbounded connected curve with locally finite length.
We will consider the (slightly) stronger assumption:

\smallskip
$\bullet$ ({\bf uniform connectedness}) there is a single unbounded connected component
of this set and all pairs of points $x$, $x'$ in this component can be connected with a curve  of length proportional to the distance between $x$ and $x'$ lying in the component.

We will specify separately these topological and metric properties of the level sets in Definitions \ref{esistenza raccordi} and \ref{HyH}.

\smallskip
This property is easily verified if 
$L_{\varphi}^z$ can be written as the union of sets $\{x\in \Rn{m}: \varphi_j(x)=z_j\}$
which are composed of unions of periodic $C^1$ hypersurfaces, for some $\varphi_j$ and $z_j$.

\smallskip
Under the uniform-connectedness assumption, the homogenization of the Dirichlet integral with the strict constraint (\ref{stricco}) is described by an integral functional
$$
\int_\Omega \psi^z_{\rm hom} (u')\,dt,
$$
with $\psi^z_{\rm hom}:\Rn{m}\to[0,+\infty)$ a two-homogeneous convex function.
Moreover, $\psi^z_{\rm hom}$ satisfies the {\em asymptotic homogenization formula}
$$
\psi^z_{\rm hom}(w)=\lim_{T\to+\infty} \psi^z_T(w),
$$
where
\begin{eqnarray}\label{def psiTz intro}
\psi_T^{z} (w)=  \frac{1}{T} \min \Bigl\{  \int_0^T |u'|^2dt:   \varphi(u)=z,
 |u(0)|\le  \sqrt{m},|u(T)-Tw| \le  \sqrt{m} \Bigr\}.
\end{eqnarray}
This is a variation on corresponding formulas for the homogenization of functionals with energy densities $f(u/\e,u')$ (see [8], Chapter 15). Note that, in the present case such energy densities are defined with the value $+\infty$ outside the constraint, so that some extra care must be taken; in particular, we cannot easily impose strict boundary conditions (that in the usual case would read $u(0)=0$ and $u(T)=Tw$). We prefer to substitute those conditions with an inequality, which in particular is satisfied when the initial datum $u_0=u(0)$ is any point in the periodicity cube $(0,1)^m$ satisfying $\varphi(u_0)=z$.

\smallskip
In order to derive the homogenization theorem for our original energies $F_\e$ from the strictly constrained energies we make the following assumptions:

$\bullet$ for all $z$ in the image of $\varphi$, either $L^z_\varphi$ is uniformly connected (in the sense defined above) or composed by disjoint bounded closed components;

$\bullet$ curves satisfying the {\em weaker constraint} $u(t)\in\{x\in \Rn{m}: |\varphi(x)-z|\le c\}$
almost everywhere are close to curves satisfying a strict constraint for some $z'$ whenever $c$ is small enough.

Before stating the latter condition more precisely, we consider the model example of $m=2$
and
$$
\varphi(x,y)=\sin(2\pi x)\sin (2\pi y).
$$
In this case, the only connected level set $L^z_\varphi$ is with $z=0$. A set $\{(x,y): |\varphi(x,y)-z|\le c\}$, with $z\neq 0$,
is either composed of disconnected components (when $c<|z|$), or contains a tubular neighbourhood of $L^0_\varphi$. In any case, given a curve $u$ taking values in that set, we can find a curve $v$ satisfying the strict condition $\varphi(v(t))=0$ close to the original curve and with energy not greater than the energy of $u$ times $1+o{(}1)$ as $c\to 0$.

With this example in mind we can state the second condition above as follows.
For $w \in \Rn{m}$, $z \in{\rm Im}(\varphi) \subset \mathbb{R}$, $c,T>0$, we consider the minimum problems
\begin{eqnarray}\label{def psiTzc intro}\nonumber
\psi_T^{z,c} (w)&= & \frac{1}{T} \min  \Bigl\{\int_0^T |u'|^2dt :  |\varphi(u)-z| \le  c,\\
&&\qquad  |u(0)|\le  \sqrt{m} , \  |u(T)-Tw|\le  \sqrt{m} \Bigr\}.
\end{eqnarray}
Then we require that there exists $z'$ such that $|z-z'|\le c$ and for all $w$ there exists $w'=w+o_T(1)$ such that
$$
\psi_T^{z,c} (w)\ge (1+o_c(1))\, \psi_T^{z'} (w')+ o_T(1),
$$
where $o_c(1)\to 0$ as $c\to 0$ and $o_T(1)\to 0$ as $T\to+\infty$.

This is the (rather complex) variational formulation of a {\em geometric stability property} of level sets which is easily proved for ordinary constraints. By using this property it is possible to prove the homogenization result by reducing to functions satisfying strict constraints.
We summarize the main arguments in the
proof: from energy bounds we deduce that functions $u_\e$ with equibounded $F_\e(u_\e)$ locally must lie in some set $\{x:|\varphi(u_\e/\e)-z|\le c\}$
with $c$ small. By a scaling argument then the energy is estimated using $\psi_T^{z,c} (w)$, where $w$ is the local averaged slope of $u_\e$,
and eventually with $\psi_T^{z'} (w')$. A particular care has to be taken in these
computations in order to reduce to ``local'' estimates which nevertheless, after scaling, can be estimated by problems with $T$ large enough.
Finally, we get our homogenized energy density by optimizing over $z$ and over mesoscopic oscillations between level sets, producing a convex envelope of the minima between homogenization formulas on strict constraint:
\[
\psihom= \Bigl( \min_{z \in {\rm Im}(\varphi)} \psihom^z\Bigr)^{**}.
\]
Note that under our assumptions on level sets, in dimension two there is only one infinite connected level set for some $z=z_0$, so that this formula simplifies to $\psihom^{z_0}$.
In dimension three or higher it is instead possible to give examples where there are more
than one infinite connected level set, and the formula above must indeed be applied.
Eventually, the $\Gamma$-limit is given by
\[
\Gamma\hbox{-}\lim_{\eps\to 0} \Feps(\ueps) = \int_0^1 \psihom(u') dt.
\]
We remark that the proof of the upper bound does not rely only on the construction of periodic recovery sequences
from minimizers of $\psi^z_T$ as customary in homogenization problems, but also on a multi-scale argument necessary to connect such sequences for different $z$.

The functionals $\Feps$ in the scalar case $m^\prime=1$ can be geometrically interpreted as follows: a level set of $\varphi$ containing a unique unbounded connected component can be viewed as a sort of periodic unbounded and connected $\eps$-network over $\Rn{m}$, that represents the ``allowed'' zones for curves $\ueps$ in sublevel sets of $\Feps$. Indeed, if $\ueps$ lie on this network, the gradient of $\varphi(u_\eps/\eps)$ will be zero and the $\Gamma$-limit can be finite on their limit $u$. In this metric standpoint, we can interpret $\psihom$  as measuring the distance between the origin and the point $w$, not with the euclidean norm, but with the length of a curve that microscopically lies in the lattice defined by the constraint.

\smallskip
The plan of the paper is as follows: Section \ref{chapter to infty} is devoted to the statement of the problem, the assumptions, and the statement of the main results (Theorem \ref{thm gammalim}). Preliminary results are in Section \ref{sec prelim} and Theorem \ref{thm gammalim} is proved in Section \ref{sec gamma lim}. Some examples follow in Section \ref{esempi}. In the final section of the paper, when $m=2$, we apply our result to  the characterization of all metrics on $\Rn{2}$ that can be obtained from some  $\varphi$, as a $\Gamma$-limit, following the procedure above. This problem is linked to the description of homogenized Riemannian metrics by Braides, Buttazzo and Fragal\`a \cite{B-B-F} (see also Burago \cite{Bu}).
In our case all limit metrics $\psihom$ will satisfy the necessary condition $\psi(z)\ge|w|^2$, with the equality achieved when $\varphi=0$.
Indeed, we prove that this condition is also sufficient; that is, any (possibly degenerate) symmetric Finsler metric larger than the Euclidean metric can be approximated by a $\psihom$ defined through the homogenization process of an oscillating constraint problem.

\section{Statement of the homogenization result}\label{chapter to infty}



The function $\varphi : \Rn{m} \to \Rn{m'}$ will be a fixed $1$-periodic Lipschitz function.
For $\e>0$ and $\delta=\delta_\e>0$ we consider $F_{\eps}: L^2([0,1], \Rn{m}) \to [0,+\infty]$ in the following unconstrained form

\begin{equation}\label{def Feps curve}
F_{\eps}(u)= \int_{0}^1\Bigl( | u'|^2 + \Bigl(\frac{\delta}{\eps}\Bigr)^2 \Bigl| \nabla_y \varphi \Bigl({u\over\e}\Bigr)  u' \Bigr|^2\Bigr) dt.
\end{equation}


For all $w \in \Rn{m}, z \in{\rm Im}(\varphi), c>0$, we define the energy densities
\begin{eqnarray}\label{def psiTzc}
\psi_T^{z,c} (w)=  \frac{1}{T} \min  \Bigl\{ \int_0^T |u'|^2dt: \  |\varphi(u)-z| \leq c,
 |u(0)|\leq \sqrt{m} ,|u(T)-Tw|\leq \sqrt{m} \Bigr\},
\\
\label{def psiTz}
\psi_T^{z} (w)= \frac{1}{T} \min \Bigl\{\int_0^T |u'|^2dt: \varphi(u)=z ,
|u(0)|\leq\sqrt{m},|u(T)-Tw| \leq \sqrt{m}
\Bigr\}.\qquad\quad
\end{eqnarray}

\begin{rmk}\rm
Note that, for some constraints $\varphi$ and $z$, the minimum in equations (\ref{def psiTzc}) and (\ref{def psiTz}) may be performed on an empty set, depending on the choice of $c$, $T$ and $w$. In this case we set $\psi_T^{z,c} (w)= +\infty$ or $\psi_T^{z} (w)=  +\infty$, respectively.
\end{rmk}

Before getting to our main result, we state some definitions that will use be used to clarify the hypotheses on the constraint. We begin with two geometric hypotheses for the function $\varphi$ and for its level sets or, equivalently, for the function $\psi_T^{z,c}$.

\begin{definition}\label{hp geom} We say that the constraint function
satisfies the {\em geo\-met\-ric-stability property} if  there exists a continuous function $\omega(c)$, with $\omega(c) \to 0$ as $c \to 0$, such that, for any $T>0$, $w \in \Rn{m}$, $z \in{\rm Im}(\varphi)$, one of the following two conditions is satisfied
\begin{equation}
\psi^{z,c}_T(w) = +\infty;
\end{equation}
\begin{equation}\label{eq geom}
\psi_T^{z,c}(w) \ge  (1-\omega(c)) \psi_T^{z'}(w') - \frac{k(c)}{T},
\end{equation}
for suitable $z' \in{\rm Im}(\varphi)$, $w' \in \Rn{m}$ and $k(c) \in \Rn{}$
such that $|z-z'|\le  c$, $|w-w'| \le  \sqrt{m}/T$ and $k(c)$ is independent of z.
\end{definition}

\begin{definition}\label{esistenza raccordi} The function $\varphi$ satisfies a {\em controlled-length condition} if for all $z \in{\rm Im}(\varphi)$ such that $L^z_\varphi $ is connected, there exist a constant $C \in \Rn{}$ such that for any $x,y \in \left\{\varphi = z\right\}$ there exists
a path $\gamma:[0,1] \to \Rn{m}$,$\gamma \in H^1([0,1])$, with $\gamma(0)=x$, $\gamma(1)=y$ and $\varphi(\gamma(t))=z$, such that
\[
l(\gamma):=\int_0^1|\gamma'(t)| dt \le  C |x-y|.
\]
%
\end{definition}

Note that we can prove that $\varphi$ satisfies Definition \ref{esistenza raccordi} assuming that if $L^z_\varphi$ has a connected unbounded component then it is the locally finite union of $\mathcal{C}^1$ sets.

\begin{definition}\label{HyH}
We say that $\varphi$ {\em has non-degenerate levels} if
for any $z \in{\rm Im}(\varphi)$ one of the two following conditions holds true for the set $L^z_\varphi$:
\begin{eqnarray*}\nonumber
&{\rm i)}& \hbox{ it is composed of a unique unbounded connected component;}
\\
&{\rm ii)}& \hbox{ it contains no unbounded connected components};
\end{eqnarray*}
and there exists at least one level set $L^z_\varphi $ satisfying ${\rm (i)}$.
\end{definition}

If $\varphi$ satisfies the conditions above we will prove that the limit
\begin{equation}\label{psiT hom eq}
\psi_{\text{hom}}^z (w)= \lim_{T \to \infty} \psi_T^{z} (w),
\end{equation}
with $\psi_T^{z} (w)$ defined in (\ref{def psiTz}),
exists (in Lemma \ref{lemma ex limit}
below); we then define
\begin{equation}\label{eq psihom}
\psi_{\text{hom}}= \Bigl( \min_{z  \in{\rm Im}(\varphi)} \psi_{\text{hom}}^z \Bigr)^{**},
\end{equation}
where the double asterisk stands for the convex envelope.

\begin{rmk}\rm
Note that the hypothesis in Definition \ref{HyH} rules out the case when we have
some $z$ with infinitely many disjoint unbounded connected components
(e.g., $\varphi(x,y)=\sin x$ in $\Rn{2}$). Such cases present the technical difficulty that
$\psi_{\text{hom}}^z$ will be degenerate in some directions. We will separately deal
with these situation in two dimensions $m=2$ in Section \ref{Chapter Finsler metrics}.
\end{rmk}

Now we can finally state our main result.

\begin{thm}\label{thm gammalim}
 Let the constraint function $\varphi$
 satisfy the conditions in Definitions {\rm\ref{hp geom}}, {\rm\ref{esistenza raccordi}} and  {\rm\ref{HyH}}. Let $\Feps$ be defined in {\rm(\ref{def Feps curve})}; then, for any $u \in L^2([0,1];\Rn{m})$
\[
\gammalim{F_{\eps}(u)}=F(u)=	\begin{cases}\displaystyle
						\int_0^1 \psi_{\text{\rm hom}} (u') dt	&	u \in H^1([0,1];\Rn{m})\\
						+\infty					&	\text{otherwise},
						\end{cases}
\]
in the strong topology of $L^2([0,1];\Rn{m})$, where $\psi_{\text{\rm hom}}$  is defined by {\rm(\ref{psiT hom eq})} and {\rm (\ref{eq psihom})}.
\end{thm}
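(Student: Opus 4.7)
The plan is to establish $\Gamma$-convergence via the standard two inequalities, reducing everything to the homogenization formulas \eqref{psiT hom eq} by means of the three geometric assumptions on $\varphi$.

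\emph{Lower bound.} Suppose $\ueps \to u$ in $L^2$ with $\sup_\eps \Feps(\ueps)<+\infty$. The first term of $\Feps$ gives an $H^1$ bound, so $u\in H^1$; the second term forces $\int_0^1 |\nabla_y\varphi(\ueps/\eps)\ueps'|^2\,dt = O((\eps/\delta)^2)\to 0$, i.e.\ $\ueps$ lies, in an averaged sense, in a thin tubular neighbourhood of a level set of $\varphi$. To localize, partition $[0,1]$ into $N$ intervals $I_i = [t_i, t_{i+1}]$ of length $\eta$ and on each rescale via $v_i(s) = \eps^{-1}\ueps(t_i + \eps s)$ on $[0,T]$ with $T = \eta/\eps\to\infty$. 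On most intervals $v_i$ satisfies $|\varphi(v_i)-z_i|\le c$ for some $z_i\in{\rm Im}(\varphi)$ and small $c$, so by definition the Dirichlet energy on $I_i$ is bounded below by $\eta\,\psi_T^{z_i,c}(w_i)$ with $w_i=\eta^{-1}(\ueps(t_{i+1})-\ueps(t_i))$. The geometric-stability property (Definition~\ref{hp geom}) upgrades this to $(1-\omega(c))\,\psi_T^{z_i'}(w_i') - k(c)/T$; passing first to $T\to\infty$ via \eqref{psiT hom eq} and then to $c\to 0$ yields a contribution of at least $\eta\,\psihom^{z_i'}(w_i') \ge \eta\,\min_z\psihom^z(w_i')$. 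Summing in $i$ and sending $\eta\to 0$, the Riemann sums converge to $\int_0^1 (\min_z\psihom^z)(u')\,dt$; the weak $L^2$-lower semicontinuity of $u'\mapsto\int F(u')$ forces the relaxed density, which is exactly the convex envelope $\psihom$ in \eqref{eq psihom}.

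\emph{Upper bound.} By density of piecewise affine functions and the continuity of the two-homogeneous convex $\psihom$, it suffices to build recovery sequences for $u(t)=x_0+wt$. Carath\'eodory's theorem decomposes $\psihom(w) = \sum_{j=1}^k \lambda_j\,\psihom^{z_j}(w_j)$ with $w=\sum_j\lambda_j w_j$, $\lambda_j\ge 0$, $\sum_j\lambda_j=1$. For each $j$ and large $T$ pick a near-minimizer $u_j^T$ of $\psi_T^{z_j}(w_j)$: its $\eps$-rescaled $T\eps$-periodic extension has average slope $w_j$ and energy converging to $\psihom^{z_j}(w_j)$. Partition $[0,1]$ at a mesoscopic scale $\eta$, with $\eps\ll T\eps\ll\eta\ll 1$, and allocate a fraction $\lambda_j$ of the blocks to the $z_j$-regime (using pieces of the rescaled $u_j^T$). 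Between consecutive regimes insert short junctions built from paths provided by the controlled-length condition (Definition~\ref{esistenza raccordi}) and kept in the relaxed set $\{|\varphi-z|\le c\}$; their Dirichlet cost is asymptotically negligible thanks to Definition~\ref{hp geom} and the smallness of the junction length relative to $\eta$. The total energy converges to $\sum_j\lambda_j\psihom^{z_j}(w_j) = \psihom(w)$, which is the sought upper bound.

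\emph{Main obstacle.} The delicate point is the multi-scale junction in the upper bound, i.e.\ gluing near-optimal sequences for different levels $z_j$ without spoiling the energy estimate; Definitions~\ref{esistenza raccordi}--\ref{HyH} are exactly tailored to permit this via a three-scale layout (microscopic $\eps$, mesoscopic $T\eps$ for the periodic cell, macroscopic $\eta$ for the allocation). The matching subtlety in the lower bound is the selection, on each mesoscopic interval, of a single level $z_i$ so that the bound $|\varphi(\ueps/\eps)-z_i|\le c$ holds pointwise rather than only in average: one isolates a set of ``bad'' intervals, shows their total length is $o(1)$ as $c\to 0$ using the energy bound, and absorbs their contribution into the $H^1$ a priori estimate.
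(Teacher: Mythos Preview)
Your lower-bound sketch is broadly aligned with the paper's, though the paper does not use a fixed equidistant partition. It partitions the \emph{image} $[0,b]^{m'}$ of $\varphi$ into cubes of side $b/N$, then builds an \emph{adaptive} partition $\{t_i\}$ of $[0,1]$ by recording the successive exit times of $\varphi(u_\eps/\eps)$ from enlarged cubes; this guarantees \emph{by construction} the pointwise bound $|\varphi(u_\eps/\eps)-z_i|\le \alpha/N$ on each sub-interval, so that no separate argument is needed to pass from an averaged bound to a pointwise one. The ``bad'' intervals in the paper are those that are too short ($\Delta t_i< K\eps$), and the estimate on their total measure comes from the lower bound $\delta^2 b^2/(N^2\Delta t_i)$ on the second term of $F_\eps$ over such an interval. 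Your fixed-partition strategy can be made to work too, provided you choose $\eps\ll\eta\ll\delta$; but note that this scale choice is essential and is absent from your sketch.

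The upper bound, however, has a genuine gap in the junction construction. Definition~\ref{esistenza raccordi} furnishes paths \emph{within a single level set} $\{\varphi=z\}$; it cannot be used to join pieces living on two \emph{different} levels $z_j$ and $z_{j+1}$, and such a junction can certainly not be ``kept in the relaxed set $\{|\varphi-z|\le c\}$'' for any single $z$ when $|z_j-z_{j+1}|>c$. The paper solves this differently: the junctions $\gamma_j$ are plain \emph{straight line segments} in $\Rn{m}$, with no constraint whatsoever. They do incur a penalty from the second term of $F_\eps$, but this is controlled by a scale argument: each segment has Euclidean length $O(1)$ and is traversed over a parameter interval of length $K$, so $|\gamma_j'|=O(1/K)$, and the penalty per period is $O\bigl((\delta/\eps)^2\|\nabla\varphi\|_\infty^2/K\bigr)$. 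Summed over the $\approx 1/(T\eps)$ periods in $[0,1]$ and rescaled, the total junction cost is $O\bigl((\delta/\eps)^2/(TK)\bigr)$, which vanishes provided $TK\gg(\delta/\eps)^2$; the paper takes $T=\delta^{1/3}/\eps$ and $K=\delta^{2/3}/\eps$, so that $1\ll K\ll T$ and $TK=\delta/\eps^2$. This two-scale choice ($K$ for the junction, $T$ for the period) is exactly the ``multi-scale nature of recovery sequences'' the paper emphasizes, and it replaces your incorrect invocation of Definition~\ref{esistenza raccordi}. Note also that the Carath\'eodory allocation of the $\lambda_j$ is done \emph{within a single period} $[0,T]$, not at a separate coarser scale $\eta$ as your sketch suggests.
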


\begin{rmk}\rm
The hypotheses on $\varphi$ are not optimal.
We conjecture that the hypothesis of non-degenerate levels in Definition \ref{HyH}
may be dropped, in which case the limit $\psihom$ may take the value $+\infty$
outside a linear space. In this case however, the proof of the existence of
the homogenization formula in the next section does not hold.
Furthermore, it is not clear whether the controlled-length condition
in Definition \ref{esistenza raccordi} can be relaxed to only requiring that
each pair of points $x,y$ be connected by a curve of finite length.
\end{rmk}

\section{Preliminary results}\label{sec prelim}

Using the geometric hypotheses stated above, we first prove the homogenization formulas. The proof follows standard subadditive arguments, which work thanks to the
hypotheses in Definitions {\rm\ref{hp geom}},  {\rm\ref{esistenza raccordi}} and  {\rm\ref{HyH}}, but have to be followed with some additional care due to the presence of the constraint.

\begin{lemma}\label{lemma ex limit}
Let $\varphi$ satisfy the conditions in Definitions {\rm\ref{hp geom}},  {\rm\ref{esistenza raccordi}} and {\rm\ref{HyH}}. Let $w \in \Rn{m}$ and $z \in{\rm Im}(\varphi)$,
and for all $T>0$ let  $\psi^z_T(w)$ be given by {\rm(\ref{def psiTz})}.
Then, the limit
$$
\psi^z_{\hom}(w)=\lim_{T\to+\infty} \psi^z_T(w)
$$
exists. If the set $L^z_\varphi $ is composed of a unique unbounded
connected component then $\psi^z_{\hom}(w)<+\infty$ for all $w$.
\end{lemma}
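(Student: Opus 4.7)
This is a classical subadditive-type convergence statement for the strict-constraint minima $\psi_T^z(w)$, so the natural strategy is a Fekete-style argument. I need two ingredients: a uniform \emph{a priori} upper bound on $\psi_T^z(w)$ (which also yields the finiteness statement in case (i) of Definition \ref{HyH}), and an approximate subadditivity inequality obtained by concatenating optimal curves on a fixed length scale $T_k$ with short bridging arcs provided by the controlled-length condition of Definition \ref{esistenza raccordi}.

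\emph{Step 1 (uniform upper bound and degenerate case).} Assume first that $L^z_\varphi$ has a unique unbounded connected component. The $\mathbb{Z}^m$-periodicity of $\varphi$ lets me choose $x, y \in L^z_\varphi$ with $|x| \le \sqrt m$ and $|y - Tw| \le \sqrt m$; Definition \ref{esistenza raccordi} provides a path in $L^z_\varphi$ joining $x$ and $y$ of length at most $C|x-y| \le C(T|w| + 2\sqrt m)$, and reparametrising it at constant speed on $[0,T]$ yields an admissible competitor in (\ref{def psiTz}). This gives $\psi_T^z(w) \le C^2|w|^2 + O(1/T)$, uniform in $T\ge 1$, proving the finiteness claim. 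In case (ii) of Definition \ref{HyH} every connected component of $L^z_\varphi$ is bounded and $\mathbb{Z}^m$-periodic; then $\psi_T^z(0) = 0$ (constant curve at a component close to the origin), while $\psi_T^z(w) = +\infty$ for $w \ne 0$ and $T$ large (no admissible curve exists), so the limit exists trivially.

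\emph{Step 2 (approximate subadditivity).} Restricting to case (i), set $\mu = \liminf_{T \to \infty}\psi_T^z(w)$, pick $T_k \to +\infty$ realising it, and let $u_k$ be a minimiser for $\psi_{T_k}^z(w)$. For each large $T$, put $N = \lfloor T/(T_k+1) \rfloor$ and build a competitor on $[0,T]$ out of $N$ translated copies of $u_k$ (each on a time interval of length $T_k$) separated by $N-1$ bridges of time $1$, plus a final bridge on the residual interval of length $T - N(T_k+1) + 1 \in [1, T_k+2)$. The translations are cumulative integer vectors $V_j \in \mathbb{Z}^m$, chosen as the nearest integer point to $(j-1)(T_k+1)w$; integrality preserves $\varphi = z$ by periodicity, and the \emph{cumulative} anchoring keeps the gap between the end of the $j$-th copy and the start of the $(j+1)$-th bounded by $|w| + O(\sqrt m)$, independently of $j$. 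Definition \ref{esistenza raccordi} then supplies each internal bridge at universal energy cost $K = K(|w|, m, C)$, while the final bridge has length $O((T_k+1)|w|)$ and energy $O(T_k^2)$. Summing gives
\[
T\,\psi_T^z(w) \le N T_k\, \psi_{T_k}^z(w) + (N-1)K + O(T_k^2),
\]
and dividing by $T$, letting $T \to \infty$ with $T_k$ fixed, then $k \to \infty$, yields $\limsup_T \psi_T^z(w) \le \mu$, whence equality with the liminf.

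\emph{Main obstacle.} The delicate point is the choice of translations. Naively shifting each copy by an integer approximation of $T_k w$ would make the nearest-integer rounding errors accumulate to size $O(j\sqrt m)$ after $j$ steps, forcing bridges of linearly growing length and destroying the uniformity of $K$. Anchoring $V_j$ instead to the nearest integer of the \emph{global} offset $(j-1)(T_k+1)w$ makes the rounding error oscillate rather than accumulate, keeping each bridge of uniformly bounded length. It is this bookkeeping that closes the subadditive estimate with an error $o(T)$, as the Fekete step requires.
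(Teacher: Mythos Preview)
Your proposal is correct and follows essentially the same approach as the paper: both handle the degenerate case (ii) trivially, both use the controlled-length condition to build competitors by concatenating integer-translated copies of a minimiser on scale $T$ (resp.\ $T_k$) joined by short bridging arcs, and both pinpoint the same delicate issue---that the integer shifts must be anchored to the \emph{cumulative} offset $[(k-1)(T+1)w]$ rather than iterated increments, to prevent rounding errors from accumulating. The only notational difference is that the paper compares $\psi_S^z$ to $\psi_T^z$ for arbitrary $S>T$, obtaining directly the inequality $\psi_S^z(w)\le \psi_T^z(w)+C_1/T+C_1(1+T)/S$, which after $S\to\infty$ also yields the uniform bound $\psi_{\hom}^z(w)\le \psi_T^z(w)+C_1/T$ of Remark~\ref{unist} (used later in the liminf proof); your subsequence formulation gives the existence of the limit but would need one extra line to recover that uniform estimate.
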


\begin{proof}
If $z$ is such that the set $L^z_\varphi $ contains no unbounded connected component then if $w\neq 0$ we have $\psi^z_T(w)=+\infty$ for $T$ large enough,
while for $w=0$ we have $\psi^z_T(w)=0$ for all $T$, so that the limit trivially exists.
Hence, by the hypotheses of Definition \ref{HyH} we can suppose that the set $L^z_\varphi $ is composed of a unique unbounded connected component. In this case $\psi^z_T(w) < +\infty$ for any $w$ and $T$, thanks to Definition \ref{esistenza raccordi}; indeed, to check this we can suppose that $\varphi(0)=z$, and the level $L^z_\varphi $ connects $0$ with any vector of the basis $e_1,\ldots, e_m$.
Then test functions for $\psi^z_T$ can be constructed by concatenating translations of these connections.

Let $v_T:[0,T] \to \Rn{m}$ be a minimizer for $\psi_T^z(w)$.
For $S>T$ we want to construct a competitor for $\psi^z_S(w)$ using $v_T$ by a patchwork procedure. In what follows we denote $[Tw]\in{\mathbb Z}^m$ the integer part component-wise of $Tw \in \Rn{m}$.

Let $K=K_{S,T}=\bigl[{S\over T+1}\bigr]$.
We consider $K$ curves $\gamma_k:[0,1]\to \Rn{m}$
satisfying pointwise the constraint $\varphi(\gamma_k(t))=z$ for all $t$, such that
$$
\gamma_k(0)=v_T(T)+[(k-1)(T+1)w],\qquad \gamma_k(1)=v_T(0)+[k(T+1)w],
$$
for $k\in\{1,\ldots,K-1\}$, and
$$ \gamma_K(0)=v_T(0)+[(K-1)(T+1)w],
\quad
\gamma_K(1)=w_S \hbox{ with } |w_S-Sw|\le\sqrt m.
$$
Note that
\begin{eqnarray*}
|\gamma_k(1)-\gamma_k(0)|&\le&|v_T(0)|+|Tw-v_T(T)|+ |w|\\ &&
\qquad+|[k(T+1)w]-k(T+1)w|
\\ &&
\qquad+|[(k-1)(T+1)w]-(k-1)(T+1)w|
\\
&\le& 4\sqrt m+|w|,
\end{eqnarray*}
for $k\in\{1,\ldots, K-1\}$ and
$$|\gamma_K(1)-\gamma_K(0)|\le 6\sqrt m+|w|\sqrt m (T+3).$$
Hence, by the hypothesis in Definition \ref{esistenza raccordi}, such curves exist satisfying in addition
\begin{eqnarray*}
l(\gamma_k) \le  C_1\hbox{ for } k\in\{1,\ldots, K-1\},\qquad l(\gamma_K) \le   C_1(1+T)
\end{eqnarray*}
with $C_1$ depending only on $w$ and the dimension $m$. Up to a reparameterization, we can also assume that $\gamma_k$ have constant velocity, $|\gamma_k'|=l(\gamma_k)$. Now define the function $v_S(t):\left[0, S\right] \to \Rn{m}$ by

\begin{equation}\label{def vS}
v_S(t) = 	\begin{cases}\displaystyle
			v_{T,[{t\over T+1}]+1}\Bigl(t-\Bigl[{t\over T+1}\Bigr](T+1)\Bigl)
			\\&\hskip-3cm\hbox{ if }0\le  t \le (K-1)(T+1)\cr
			\gamma_K\Bigl({t-(K-1) (T+1)\over S-(K-1) (T+1)}\Bigr)\\
				&\hskip-3cm\hbox{ if } (K-1)(T+1)\le  t\le S,
			\end{cases}
\end{equation}
where $v_{T,k}:[0,T+1]\to \Rn{m}$ is defined by
\begin{equation}\label{def vT tilde}
v_{T,k}(t) = 	\begin{cases}\displaystyle
			v_T(t)		+[(k-1)(T+1)w]		&\hbox{ if }0\le  t \le T\\					
			\gamma_k(t-T)	&\hbox{ if } T \le  t\le T+1
			\end{cases}
\end{equation}
for $k\in\{1,\ldots,K\}$. Note that $v_{T,k}(T+1)= v_{T,k+1}(0)$ for $k\in\{1,\ldots,K-2\}$,
and that $v_{T,K-1}(T+1)= \gamma_K(0)$,
 so that $v_S$ is a continuous function.


By construction we have $|v_S(0)|=|v_T(0)| \le  \sqrt{m}$, $|v_S(S)-Sw|=|[Sw]-Sw| \le  \sqrt{m}$, and $\varphi(v_{S}(t))=z$, by the periodicity of $\varphi$. Therefore, we have
\begin{eqnarray}\nonumber
\psi_S^z(w) &\le & \frac{1}{S} \int_0^S |v'_S(t)|^2 dt \\
\nonumber
&= &
\frac{1}{S}\Bigl(\int_0^{(K-1)(T+1)}
|v'_S(t)|^2 dt
+\int^S_{(K-1)(T+1)}
|v'_S(t)|^2 dt\Bigr)
\\ \nonumber
&= &
\frac{1}{S}\Bigl(\sum_{k=1}^{K-1}\int_0^{T+1} | v'_{T,k}(t)|^2 dt
+\int^S_{(K-1)(T+1)} |v'_S(t)|^2 dt\Bigr)
\\ \nonumber
&= &
\frac{1}{S}\Biggl(\Bigl(\Bigl[{S\over T+1}\Bigr]-1\Bigr)\int_0^{T} |v'_T(t)|^2 dt
+\sum_{k=1}^{K-1}\int_0^1|\gamma_k'(t)|^2\,dt\\
&&\nonumber\qquad\qquad +
{1\over S-(K-1)(T+1)}\int_0^1 |\gamma_K'(t)|^2 dt\Biggr)
\\ \nonumber
&= &
\frac{1}{S}\Biggl(\Bigl(\Bigl[{S\over T+1}\Bigr]-1\Bigr)\int_0^{T} |v'_T(t)|^2 dt
\\ \nonumber
&&\qquad\qquad+\sum_{k=1}^{K-1}l(\gamma_k)^2+
{1\over S-(K-1)(T+1)}l(\gamma_K)^2\Biggr)
 \nonumber
\end{eqnarray}
\begin{eqnarray}\nonumber
&\le &
\frac{1}{S}\Biggl(\Bigl[{S\over T+1}\Bigr](T\psi^z_T(w)+C_1)+{1\over (T+1)}l(\gamma_0)^2\Biggr)
\\
 \nonumber
&\le &
\frac{1}{S}\Bigl(\Bigl[{S\over T+1}\Bigr](T\psi^z_T(w)+C_1)+C_1(1+T)\Bigr)
\\
&\le &
\psi^z_T(w)+{C_1\over T}+C_1{1+T\over S}.\label{sthom}
\end{eqnarray}

Now, taking first the limsup as $S \to \infty$ and then the liminf as $T \to \infty$, we get
$$
\limsup_{S \to \infty} \psi_S^z(w) \le  \liminf_{T \to \infty} \psi_T^z(w),
$$
and the existence of the limit $\psihom^z(w)<+\infty$.
\end{proof}

\begin{rmk}\label{unist}\rm
If the set $L^z_\varphi $ is composed of a unique unbounded connected component then from (\ref{sthom}) we have, passing to the limit as $S\to+\infty$,
\begin{equation}\label{stisti}
\psi_{\hom}^z(w) \le  \psi_T^z(w) +{C_1\over T}
\end{equation}
for all $w$ and $T$.
\end{rmk}

For the function $\psihom^z(w)$ the following property holds:

\begin{prop}\label{prop psihom 2om}\ Let $\varphi$ satisfy the conditions in Definitions {\rm\ref{hp geom}},  {\rm\ref{esistenza raccordi}} and {\rm\ref{HyH}}. For all $z$ the function
$\psihom^z$ is a $2$-homogeneous function; i.e., for any $\lambda \neq 0$ and $w \in \Rn{2}$ one has
$$
\psihom^z(\lambda w) = \lambda^2 \psihom^z(w).
$$
\end{prop}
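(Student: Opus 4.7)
My plan is to split into the cases $\lambda>0$ and $\lambda<0$, in each instance producing an explicit competitor for $\psi^z_S(\lambda w)$ built from a near minimizer of $\psi^z_T(w)$ and then passing to the limit via Lemma \ref{lemma ex limit}.

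For $\lambda>0$ the natural move is a time rescaling. Given an admissible $u:[0,T]\to\Rn{m}$ for $\psi^z_T(w)$, I would set $v(t)=u(\lambda t)$ on $[0,T/\lambda]$. The pointwise constraint $\varphi(v)=z$ is preserved, and the endpoint conditions transfer verbatim: $|v(0)|=|u(0)|\le\sqrt{m}$ and $|v(T/\lambda)-(T/\lambda)(\lambda w)|=|u(T)-Tw|\le\sqrt{m}$. The substitution $s=\lambda t$ in the Dirichlet integral yields
$$
\int_0^{T/\lambda}|v'(t)|^2\,dt=\lambda\int_0^T|u'(s)|^2\,ds,
$$
so that $\psi^z_{T/\lambda}(\lambda w)\le \lambda^2\psi^z_T(w)$. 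Letting $T\to+\infty$ gives $\psihom^z(\lambda w)\le\lambda^2\psihom^z(w)$, and applying the same bound with $w$ replaced by $\lambda w$ and $\lambda$ by $1/\lambda$ delivers the opposite inequality, hence equality for every $\lambda>0$.

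For $\lambda<0$ it suffices, by the previous case, to prove the reflection identity $\psihom^z(-w)=\psihom^z(w)$. Starting from a near minimizer $u$ of $\psi^z_T(w)$, I would pick $n\in\mathbb{Z}^m$ with $|Tw-n|\le\sqrt{m}$ and consider the time reversed translated curve $\tilde v(s)=u(T-s)-n$ on $[0,T]$; periodicity preserves $\varphi(\tilde v)=z$, but the endpoints $\tilde v(0)=u(T)-n$ and $\tilde v(T)=u(0)-n$ only satisfy $|\tilde v(0)|\le 2\sqrt{m}$ and $|\tilde v(T)+Tw|\le 2\sqrt{m}$, violating the strict tolerance $\sqrt{m}$ required in the definition of $\psi^z_T(-w)$. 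To remedy this I would first reduce by a coordinate translation to the case $\varphi(0)=z$, so that $\mathbb{Z}^m\subset L^z_\varphi$, and then, invoking Definition \ref{esistenza raccordi}, append on two unit intervals short paths inside $L^z_\varphi$ of length bounded by a constant $C(w,m)$: one from a lattice point $P_0$ with $|P_0|\le\sqrt{m}$ to $\tilde v(0)$, and one from $\tilde v(T)$ to a lattice point $P_T$ with $|P_T+(T+2)w|\le\sqrt{m}$. Parameterised at constant speed these connectors contribute bounded Dirichlet energy $K=K(w,m)$ independent of $T$.

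Reparameterising the concatenation $\hat u$ on $[0,T+2]$ produces an admissible competitor for $\psi^z_{T+2}(-w)$ satisfying
$$
\psi^z_{T+2}(-w)\le \frac{T\psi^z_T(w)+K}{T+2}.
$$
Passing to the limit $T\to+\infty$ yields $\psihom^z(-w)\le\psihom^z(w)$, and symmetry closes the identity. I expect the main obstacle to lie precisely in this reflection step rather than in the straightforward rescaling: the endpoint tolerance $\sqrt{m}$ leaves no slack under integer translation, and the natural terminus of the reversed curve additionally misses the target $-(T+2)w$ by an amount of order $|w|$, so the correction must lie on the strict level set $\{\varphi=z\}$ yet have only $O(1)$ length, which is exactly what the controlled-length condition in Definition \ref{esistenza raccordi} is designed to furnish.
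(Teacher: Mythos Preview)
Your proposal is correct. For $\lambda>0$ your rescaling argument is exactly the paper's (the paper starts from a minimizer of $\psi^z_{T/\lambda}(\lambda w)$ and derives the reverse inequality first, but this is immaterial). For $\lambda<0$ the paper simply writes ``In order to simplify the notation we only consider the case $\lambda>0$'' and gives no further details; your time-reversal plus endpoint-correction via the controlled-length condition is precisely what is needed to fill this in, and you have correctly identified that a bare reflection plus integer shift overshoots the $\sqrt{m}$ tolerance by a bounded amount, so that short connectors inside $L^z_\varphi$ are required.

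Two small points worth adding. First, the controlled-length condition in Definition~\ref{esistenza raccordi} is only stated for $z$ with $L^z_\varphi$ connected, so you should dispose separately of case (ii) of Definition~\ref{HyH}: there $\psihom^z(w)=+\infty$ for $w\neq 0$ and $\psihom^z(0)=0$, and the $2$-homogeneity is trivial (the paper does record this case). Second, the reduction to $\varphi(0)=z$ by translating the argument of $\varphi$ is the same device used in the paper's proof of Lemma~\ref{lemma ex limit}; strictly speaking it perturbs the endpoint conditions in $\psi^z_T$ by a fixed vector of norm at most $\sqrt{m}$, but this $O(1)$ discrepancy is absorbed in the limit by exactly the kind of controlled-length correction you already build, so the reduction is harmless.
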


\begin{proof}
In order to simplify the notation we only consider the case $\lambda>0$. Consider $z \in{\rm Im}(\varphi)$ such that $L^z_\varphi $ is  composed of a unique unbounded connected component.  Let $u$ be a solution of the minimum problem defined by $\psi^z_{T/\lambda}(\lambda w)$; we have $|u(0)|\le \sqrt{m}$, $|u(T/\lambda)-T w|\le  \sqrt{m}$ and
\[
\psi^z_{T/\lambda}(\lambda w) = \frac{\lambda}{T} \int_0^{T/\lambda} |u'(t)|^2 dt = \frac{\lambda}{T} \int_0^T \left|u' \left(\frac{s}{\lambda}\right)\right|^2 \frac{ds}{\lambda}.
\]
Hence, taking $v(s)=u(s/\lambda)$, one has $|v(0)|=|u(0)|\le \sqrt{m}$, $|v(T)-Tw|=|u(T/\lambda)-T w|\le  \sqrt{m}$, so that
\[
\psi^z_{T/\lambda}(\lambda w) = \lambda^2 \frac{1}{T} \int_0^T |v'(s)|^2 ds \ge \lambda^2 \psi^z_T(w).
\]
A similar argument starting from a minimizer of $\psi^z_{T}( w)$ gives the opposite inequality, thus proving the homogeneity of $\psihom^z$ passing to the limit for $T\to+\infty$.

If one takes $z \in{\rm Im}(\varphi)$ such that $L^z_\varphi $ contains no unbounded connected components, proof is trivial, since $ \psi^z_T(w )=\psi^z_T(\lambda w )=+\infty$, for $\lambda \neq 0$ and $T$ large enough.
%
\end{proof}

\begin{rmk}\rm From the previous propositions we obtain that, $\psihom$ is convex, finite and homogeneous of degree two.
\end{rmk}
\goodbreak

\section{Proof of the homogenization result}\label{sec gamma lim}

We subdivide the proof into a lower and an upper bound.

\subsection{Lower bound}
We want to prove that, for any sequence $\ueps$ converging in the strong topology of $L^2([0,1];\Rn{m})$ to a function $u \in L^2([0,1];\Rn{m})$ as $\eps \to 0$, one has
\begin{equation}\label{liminf ineq}
\liminf_{\eps \to 0} \Feps(\ueps) \ge  F(u).
\end{equation}

First of all observe that we can assume, without loss of generality, that
\begin{equation}\label{eq equibdd}
\Feps(\ueps)\le  \lambda < +\infty \quad \hbox{ for all } \eps > 0,
\end{equation}
otherwise the $\Gamma \hbox{-}\liminf$ inequality (\ref{liminf ineq}) is trivial. By the equiboundedness of $\Feps$, we also deduce that $\int_0^1 |\ueps'|^2 dt < \lambda$, so that $\|\ueps \|_{H^1}$ is equibounded.  Hence, we also have $\ueps  \weakconv u$ weakly in $H^1([0,1];\Rn{m})$.

Since $\varphi$ is continuous and periodic, Im$(\varphi)$ is a bounded set. Upon adding a constant vector to $\varphi$ we may suppose that Im$(\varphi) \subseteq [0,b]^{m'}$
for some constant $b>0$. With fixed $N\in{\mathbb N}$ we subdivide $[0,b]^{m'}$ in $N^{m'}$ cubes $Q_j=Q^N_j$ with disjoint interior, edge
of side length $b/N$ and centre $z_j$; i.e.,
\[
Q_j= z_j + \frac{b}{N} \left[ -\frac{1}{2}, \frac{1}{2} \right]^{m'}
\quad  j\in\{1,\dots,N\}^{m'},
\]
so that $Q_j$ and $Q_k$ may intersect only at their boundary, and  $\bigcup_{j}\partial Q_j$ is a part of a cubic lattice in $\Rn{m'}$ of edge $b/N$.

We now use the subdivision of the image of $\varphi$ into cubes to split the domain $[0,1]$ of $\ueps$ into subintervals where $\varphi(u_\e)$ is ``almost constant''. To that end, we
construct an increasing finite sequence $t_i=t_i^{N,\e}$ as follows. We first set $t_0=0$, and correspondingly $z^0$ any centre $z_j$ of a cube such that $\varphi(u_\e(0))\in Q_j$
(which may be not unique if $\varphi(u_\e(0))\in \partial Q_j$).
 We define recursively
\begin{equation}\label{def ti}
t_{i+1}= \min \Bigl\{ t\in (0,1): 
t\ge t_{i}, \varphi_{\eps}(\ueps(t)) \in \partial \Bigl(z^{i} + \frac{b}{N}\Bigl[-\frac{3}{2}, \frac{3}{2} \Bigr]^{m'}\Bigr) \Bigr\},
\end{equation}
if this set is not empty. In this case, we choose as $z^{i+1}$ the centre of any cube $Q_j$ such that $\varphi(u_\e(t_{i+1}))\in \partial Q_j$. 
  Note that $t_{i+1}>t_i$ since $\varphi_{\eps}(\ueps(t_i)) \not\in \partial \Bigl(z^{i} + \frac{b}{N}\bigl[-\frac{3}{2}, \frac{3}{2} \bigr]^{m'}\Bigr)$. If the set in (\ref{def ti}) is empty then we define $t_{i+1}=1$, and stop the iteration procedure.
%
%


With this subdivision of $[0,1]$ we want to highlight the instants $t_i$ where $\varphi_{\eps}(\ueps)$ is passing from a cube $Q_j$ to a neighbouring one $Q_k$.
Note that, for two consecutive $t_i$, by definition, one has
\begin{equation}\label{stinco}
|\varphi_{\eps}(\ueps(t_{i+1}))-\varphi_{\eps}(\ueps(t_{i}))| \ge \frac{b}{N}.
\end{equation}

Also note that, if $\ueps$ always lies in the cube
$z_0 + \frac{b}{N}\bigl(-\frac{3}{2}, \frac{3}{2} \bigr)^{m'}$ then our set of points is made just by $t_0=0$ and $t_1=1$.

In the procedure described above
it is not a priori clear if the number of $t_i$ is finite for $\eps$ and $N$ fixed.
This is made precise by the following result.

\begin{lemma}\label{lemma JepsN}
For any $N > 0$ and $K>0$, there exists $\eps_0$ such that for all $\eps < \eps_0$
the set $\left\{ t_i\right\}_i$ defined in {\rm(\ref{def ti})} is finite.
If we denote $$
J^N_{\eps}:= \sharp (\left\{ t_i \right\}_i)-1m
$$
then we have
\begin{equation}\label{eq >Keps}
\Delta t_i:=t_{i}-t_{i-1} \ge  K \eps^2,\quad i\in\{1,\dots, J^N_{\eps}\}.
\end{equation}
\end{lemma}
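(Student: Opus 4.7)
\emph{Plan.} The strategy is to show that each gap $\Delta t_i$ between consecutive jump times is forced to be at least $K\eps^2$ by combining the Lipschitz continuity of $\varphi_\eps$ with the equibounded Dirichlet energy of $u_\eps$. Finiteness of $\{t_i\}$ then follows at once, since only finitely many intervals of length at least $K\eps^2$ fit inside $[0,1]$.

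From the standing assumption $F_\eps(u_\eps)\le\lambda$ and the nonnegativity of both summands in (\ref{def Feps curve}), one has $\int_0^1|u_\eps'|^2\,dt\le\lambda$, and Cauchy--Schwarz gives $|u_\eps(t)-u_\eps(s)|\le\sqrt{\lambda(t-s)}$ for $0\le s\le t\le 1$. Since $\varphi$ is a fixed Lipschitz function with some constant $L$, the rescaling $\varphi_\eps(x)=\delta\,\varphi(x/\eps)$ is Lipschitz with constant $\delta L/\eps$; combining the two estimates yields
$$
|\varphi_\eps(u_\eps(t_i))-\varphi_\eps(u_\eps(t_{i-1}))|\le\frac{\delta L}{\eps}\sqrt{\lambda\,\Delta t_i}.
$$
Inserting the lower bound (\ref{stinco}) on the left-hand side gives
$$
\frac{b}{N}\le\frac{\delta L}{\eps}\sqrt{\lambda\,\Delta t_i},
\qquad\text{so}\qquad
\Delta t_i\ge\frac{b^2\,\eps^2}{N^2 L^2\lambda\,\delta^2}.
$$

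Because $\delta=\delta_\eps\to 0$ as $\eps\to 0$, for any fixed $N$ and $K$ one can choose $\eps_0$ small enough that $\delta_\eps^2\le b^2/(N^2 L^2\lambda K)$ whenever $\eps<\eps_0$, which converts the previous display into $\Delta t_i\ge K\eps^2$ and establishes (\ref{eq >Keps}). Since each $\Delta t_i$ is bounded below by the same fixed positive quantity and the $t_i$ lie in $[0,1]$, the recursion can produce at most $\lfloor 1/(K\eps^2)\rfloor+1$ points, so the sequence is finite. The only mildly subtle point of the argument is that the estimate closes thanks to the vanishing amplitude $\delta_\eps\to 0$, even though the ratio $\delta_\eps/\eps$ is blowing up in the regime of interest; once this is noticed the proof reduces to a double application of Cauchy--Schwarz together with the Lipschitz scaling of $\varphi_\eps$.
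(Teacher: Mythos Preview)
Your argument contains a genuine gap stemming from an inconsistent use of $\varphi_\eps$. In the paper's Section~4.1 the cubes $Q_j$ have side $b/N$ and tile $[0,b]^{m'}\supseteq{\rm Im}(\varphi)$, so the quantity appearing in (\ref{def ti}) and (\ref{stinco}) is $\varphi(u_\eps/\eps)$ \emph{without} the factor $\delta$. You, however, compute the Lipschitz constant of $\delta\,\varphi(\cdot/\eps)$ as $\delta L/\eps$ and then still invoke the jump bound $b/N$ from (\ref{stinco}). These two choices are incompatible: either the jump is only $\delta b/N$ (if $\varphi_\eps=\delta\varphi(\cdot/\eps)$), or the Lipschitz constant is $L/\eps$ (if $\varphi_\eps=\varphi(\cdot/\eps)$). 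In both consistent readings the $\delta$'s cancel and one obtains only
\[
\Delta t_i\ \ge\ \frac{b^2}{N^2L^2\lambda}\,\eps^2,
\]
a bound with a \emph{fixed} constant. This suffices for finiteness of $\{t_i\}$, but it does \emph{not} yield $\Delta t_i\ge K\eps^2$ for an arbitrary prescribed $K$, which is the actual content of (\ref{eq >Keps}).

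The paper's proof avoids this by using the \emph{second} summand of $F_\eps$ rather than the Dirichlet term: from $F_\eps(u_\eps)\le\lambda$ and Jensen applied to $\bigl(\frac{\delta}{\eps}\bigr)^2\int_{t_{i-1}}^{t_i}|\nabla_y\varphi(u_\eps/\eps)\,u_\eps'|^2\,dt=\delta^2\int_{t_{i-1}}^{t_i}|(\varphi(u_\eps/\eps))'|^2\,dt$ one gets $\Delta t_i\ge \delta^2 b^2/(\lambda N^2)$. Since $\delta/\eps\to+\infty$, this lower bound divided by $\eps^2$ tends to infinity, which is precisely what gives the arbitrary $K$. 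Your route via the first summand and Lipschitz scaling cannot access this diverging factor; the constraint term is essential here.
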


\begin{proof}
 For any $i\in\{1,\dots,J^N_{\eps}\}$, using Jensen's inequality, the hypothesis (\ref{eq equibdd}) and the estimate (\ref{stinco}), we have
\begin{eqnarray}\nonumber
\lambda&>&\Feps(\ueps) \ge  \delta^2 \int_{t_{i-1}}^{t_i} |\varphi_{\eps}(\ueps)'|^2 dt
\\ \nonumber
&\ge & \frac{\delta^2}{\Delta t_i} (\varphi_{\eps}(\ueps(t_i)) - \varphi_{\eps}(\ueps(t_{i-1})))^2
 \\
&\ge& \frac{\delta^2}{\Delta t_i} \frac{b^2}{N^2}.\label{stit}
\end{eqnarray}

Since $\delta \to 0$ and $\delta / \eps \to +\infty$ as $\eps \to 0$ there exists $\eps_0>0$ such that, for any $\eps < \eps_0$ one has $\delta \ge  \eps$. Hence we get
\[
\Delta t_i \ge  \frac{\eps^2 b^2}{\lambda N^2},
\]
Summing up in $i$ we then obtain
\begin{equation}\label{stille}
J^N_{\eps}\le  \frac{\lambda N^2}{b^2\eps^2} < +\infty.
\end{equation}

From (\ref{stit}) we also immediately deduce (\ref{eq >Keps}) as
$$
\Delta t_i \ge  {\delta^2 b^2\over\e^2 \lambda N^2}\, \eps^2,
$$
and $\delta/\e\to+\infty$.
\end{proof}
%

We note that equation (\ref{eq >Keps}) gives an upper bound for the number of the intervals of the partition $\{t_i\}$. Since we want to use this partition to define a piecewise-affine approximation of the target function $u$ we need to possibly refine it. To that end we fix $M\in{\mathbb N}$ and introduce a new partition $\overline{t}_i$, for $i=1, \dots, \overline{J}^{N,M}_{\eps}$, subdividing each interval such that
$\Delta t_i\ge {1\over M}$ possibly adding other points to $\{t_i\}$, in such a way that the new partition satisfies
\[
\overline{\Delta t}_i= \overline{t}_i-\overline{t}_{i-1}<\frac{1}{M}.
\]
We also suppose that the new partition is not too fine; i.e., that
\begin{equation}\label{cort}
\overline{\Delta t}_i= \overline{t}_i-\overline{t}_{i-1}>\frac{1}{2M}
\end{equation}
if either $\overline{t}_i$ or $\overline{t}_{i-1}$ do not belong to the original partition.
In particular, each interval is subdivided in at most $2M$  subintervals,
so that, if we denote by $\overline{J}^{N,M}_{\eps}$ the total number of the
intervals of the new partition, by (\ref{stille}),
\[
\overline{J}^{N,M}_{\eps} \le  \frac{\lambda N^2}{\eps^2 b^2} 2M.
\]

Note that by construction we have
\begin{equation}\label{eq phiti}
|\varphi_{\eps}(\ueps(s))- \varphi_{\eps}(\ueps(s'))|<\frac{3b}{N}\sqrt{m'} \quad s,s' \in [\overline{t}_{i-1},\overline{t}_{i}];
\end{equation}
and
\begin{equation}
K \eps^2 < \overline{\Delta t}_i < \frac{1}{M}
\end{equation}
for all $\eps < \eps_0$.

Now we single out a type of intervals that can be neglected in the estimate of
$\Feps(\ueps)$. With fixed $K,M,N$ consider the set of indices
\[
I^K_{\eps}=\left\{i\in\{1,\ldots,\overline{J}^{N,M}_{\eps}\}: |\overline{t}_{i}-\overline{t}_{i-1}|< K\eps \right\}
\]
and the respective set of intervals $B^K_{\eps}=\bigcup_{i \in I^K_{\eps}}[\overline{t}_{i-1}, \overline{t}_{i}] \subset [0,1]$. We also suppose that $\e$ is small enough so that
$$
\e K\le {1\over 2M}.
$$
In such a way, by (\ref{cort}), the endpoints of each such interval both belong to the original partition $\{t_i\}$ and (\ref{stinco}) holds.

Arguing as in the proof of Lemma \ref{lemma JepsN}, we have
\[
\lambda>\Feps(\ueps) \ge  \frac{\delta^2b^2}{N^2 K \eps} \sharp I^K_\eps
\]
so that
\[
\sharp I^K_\eps \le  \frac{\lambda N^2 K \eps}{\delta^2 b^2}.
\]
Therefore one has
\begin{equation}\label{eq IKeps}
|B^K_\eps|\le  \sum_{i \in I^K_\eps} \Delta t_i \le  K\eps \sharp I^K_\eps \le  {\lambda N^2 K^2\over b^2} \frac{\eps^2}{\delta^2} =o(1)
\end{equation}
as $\eps \to 0$.

We will give a lower bound by estimating $\Feps(u_\e)$ only by the contribution of
the unconstrained part for the intervals not belonging to $I^K_\e$
; i.e.,
\begin{eqnarray*}
\Feps(\ueps)& \ge & \sum_{i\notin I^K_{\eps}} \int_{\overline{t}_{i-1}}^{\overline{t}_{i}} |\ueps'|^2 dt
\\
&= & \sum_{i \notin I^K_{\eps}} \eps \int_0^{\frac{\overline{\Delta t}_i}{\eps}} \left| \veps'(s)\right|^2 ds
=\sum_{i \not\in I^K_{\eps}} {\overline{\Delta t}_i} {1\over T^i_{\eps}} \int_0^{T^i_{\eps}} \left| \veps'(s) \right|^2 ds,
\end{eqnarray*}
where  we have used the change of variable $s=(t-\overline{t}_{i-1})/\eps$, defining $$\veps(s)={\ueps(\eps s + \overline{t}_{i-1})\over\eps} - \Bigl[{\ueps(\overline{t}_{i-1})\over\eps}\Bigr],
$$ so that $\veps'(s)=\ueps'(\eps s + \overline{t}_{i-1})$, and we have set
$$
T^i_{\eps}=\frac{\overline{\Delta t}_i}{\eps}.
$$

If we define
$$w_{\eps}^i= \frac{\ueps(\overline{t}_i)-\ueps(\overline{t}_{i-1})}{\overline{\Delta t}_i},
$$
then we have
\[
|\veps(0)|=\left|\frac{\ueps(\overline{t}_{i-1})}{\eps} - \left[\frac{\ueps(\overline{t}_{i-1})}{\eps} \right] \right| \le  \sqrt{m},
\]
\[
\left| \veps(T^i_{\eps}) - T^i_{\eps}w_{\eps}^i \right|=\left| \frac{\ueps(\overline{t}_{i-1})}{\eps} - \left[\frac{\ueps(\overline{t}_{i-1})}{\eps} \right] \right| \le  \sqrt{m},
\]
and, by (\ref{eq phiti}) and the periodicity of $\varphi$
\begin{eqnarray*}
\left|\varphi \left(\veps(s)\right) - \overline z_i\right| &=& \left|\varphi \left( \frac{\ueps(s\eps +\overline{t}_{i-1})}{\eps}-\left[ \frac{\ueps(\overline{t}_{i-1})}{\eps} \right]\right) - \overline z_i\right|
\\
&=&\left| \varphi_{\eps}(\ueps(s\eps +\overline{t}_{i-1})) -\overline z_i\right| \le  \frac{\alpha}{N}, \quad \hbox{ if } s \in [0,T^i_{\eps}],
\end{eqnarray*}
where $\overline z_i$ is one of the centres $\{z^i\}$ of cubes $Q_j$  given by the construction of the partition $\{t_i\}$, corresponding to the interval $[\overline{t}_{i-1}, \overline{t}_i]$ and $\alpha$ depends on $b$ and $m'$.
Then, using definition (\ref{def psiTzc}), one has
\[
\Feps(\ueps) \ge  \sum_{i \notin I^K_{\eps}} \overline{\Delta t}_i \psi_{T^i_{\eps}}^{\overline z^i,\frac{\alpha}{N}} (w_{\eps}^i).
\]

Now we use the hypothesis in Definition \ref{hp geom}.
For any $i \notin I^K_{\eps}$ and a fixed $N>0$, there exists $z'_i$ and $\overline{w}^i_{\eps}$ such that
\[
\psi_{T^i_{\eps}}^{\overline z_i,\frac{\alpha}{N}} (w_{\eps}^i) \ge  \left(1-\omega\left(\frac{\alpha}{N} \right) \right) \psi_{T^i_{\eps}}^{z'_i}(\overline{w}^i_{\eps}) - \frac{k(N)}{T^i_{\eps}}
\]
and
\begin{equation}\label{eq ww'}
|w^i_{\eps}-\overline{w}^i_{\eps}|\le  \frac{\sqrt{m}}{T^i_{\eps}}.
\end{equation}

Note that $T^i_{\eps}\ge K$, so that, upon supposing $K$ large enough, the finiteness of $\psi_{T^i_{\eps}}^{z'_i}(\overline{w}^i_{\eps})$ implies that either $ \overline{w}^i_{\eps}=O(1/T^i_{\eps})$ or all sets $\{\varphi= z'_i\}$
have a unique unbounded connected component. In the first case, since, by Proposition \ref{prop psihom 2om} $\psihom(w) \le c |w|^2$, we can estimate
$$
\psi_{T^i_{\eps}}^{\overline z^i,\frac{\alpha}{N}} (w_{\eps}^i)\ge 0\ge
\psihom (w_{\eps}^i)- C{1\over (T^i_{\eps})^2},
$$
while in the second case we can use (\ref{stisti}) in Remark \ref{unist}, and the estimate $\psihom^{z'_i}\ge\psihom$.
Summing up (and recalling that  $T^i_{\eps}\ge K$) we get
\begin{eqnarray}\nonumber
\Feps({\ueps}) &\ge &
 \sum_{i \notin I^K_{\eps}} \overline{\Delta t}_i \left( \left(1-\omega\left(\frac{\alpha}{N} \right) \right) \psi_{T^i_{\eps}}^{z_i'} (\overline{w}_{\eps}^{i}) - \frac{k(N)}{T^i_{\eps}} \right)
\\
\nonumber
&\ge & \left(1-\omega\left(\frac{\alpha}{N} \right) \right)
\left(1-\frac{1}{K+1}\right)
 \sum_{i \notin I^K_{\eps}} \overline{\Delta t}_i \Bigl( \psihom (\overline{w}_{\eps}^{i}) - \frac{k'(N)}{K}\Bigr).
\\
&\ &\ \label{tret}
\end{eqnarray}

If we consider the piecewise-affine functions $u^{N,M,K}_\e$ defined by
$u^{N,M,K}_\e(0)= u_\e(0)$ and
$$
{d\over dt} u^{N,M,K}_\e(t)=\begin{cases}\overline{w}_{\eps}^{i} &\hbox{if $t\in [\overline t_{i-1},
\overline t_{i}]$, $i\not\in I^K_\e$}\cr
0 &\hbox{ otherwise}\end{cases}
$$
almost everywhere in $[0,1]$, then we have
$$
\sum_{i \notin I^K_{\eps}} \overline{\Delta t}_i \psihom (\overline{w}_{\eps}^{i})
=\int_0^1 \psihom\Bigl({d\over dt} u^{N,M,K}_\e\Bigr)dt.
$$
Moreover, if we denote by $u_{N,M,K}$ any of the limits of $ u^{N,M,K}_\e$ in $L^2([0,1];\Rn{m})$
(which exist up to subsequences), then, by the lower semicontinuity of $v\mapsto \int_0^1 \psihom(v')dt$, we have
$$
\liminf_{\e\to0} F_\e(u_\e)\ge \left(1-\omega\left(\frac{\alpha}{N} \right) \right)
\left(1-\frac{1}{K+1}\right)\int_0^1\psihom(u_{N,M,K}')dt- \frac{k'(N)}{K}.
$$
By (\ref{eq IKeps}) and (\ref{eq ww'}) we have
$$
\|u_{N,M,K}-u\|_{L^2}= o(1)
$$
as $K\to+\infty$, $N\to+\infty$ and eventually $M\to+\infty$.
Again by lower semicontinuity we finally obtain
$$
\liminf_{\e\to0} F_\e(u_\e)\ge \int_0^1 \psihom(u')dt
$$
as desired.

\subsection{Upper bound}
Given $u \in H^1([0,1];\Rn{m})$, we want to find a sequence $\ueps \in H^1([0,1];\Rn{m})$ such that
\begin{equation}\label{limsup eq}
\limsup_{\eps \to 0} \Feps(\ueps) \le  F(u).
\end{equation}
Since the integrand $\psihom$ is a convex, lower-semicontinuous and coercive function,
we can follow a standard approximation procedure, and prove (\ref{limsup eq}) only when $u$ is a linear (or affine) function. The general case is obtained by localizing the construction for piecewise-affine functions and in general by the strong density of such functions in $H^1$.

\smallskip
We consider the case $u(t)= wt$ with $w\in\Rn{m}$.
By Caratheodory's theorem there exist vectors $w_i \in \Rn{m}$ and coefficients $\lambda_i \in [0,1]$, for $i\in\{1, \dots, m+1\}$, such that

\begin{equation}\label{carath eq1}
\sum_{i=1}^{m+1} \lambda_i =1, \quad \quad \sum_{i=1}^{m+1} \lambda_i w_i = w,
\end{equation}
and
\begin{equation}\label{carath eq2}
\psihom(w)=\sum_{i=1}^{m+1}\lambda_i \psihom^{z_i}(w_i).
\end{equation}


For any fixed $T>0$ and $i\in\{1, \dots, m+1\}$ let $u_i:[0,\lambda_i T] \to \Rn{m}$ be a
solution for the minimum problem
\begin{eqnarray*}
\psi_{\lambda_i T}^{z_i}(w_i)&= &\frac{1}{\lambda_i T} \min \Bigl\{ \int_0^{\lambda_i T} |v'|^2 dt \quad |v(0)-0|\le\sqrt{m},
\\
&&\qquad\qquad |v(\lambda_i T)-\lambda_i T w_i|\le\sqrt{m}, \varphi_{\eps}(v)=z_i \Bigr\}.
\end{eqnarray*}

We want to construct a recovery sequence by a patchwork procedure using such $u_i$. To that end, we fix a positive constant $K$ and we define (for notational convenience we set $\lambda_0=0$)
\begin{eqnarray*}
a_j=\sum_{i=0}^{j-1} \lambda_i T + (j-1)K,
\quad
b_j=\sum_{i=1}^{j} \lambda_j T + (j-1)K,
\quad
c_j&=&\sum_{i=1}^j \lambda_i T + jK.
\end{eqnarray*}
for $j\in\{1,\dots,m+1\}$.
We also denote by $\gamma_j:[0,K] \to \Rn{m}$ the parameterization with constant velocity
of the line segments from ${\sum_{i=0}^{j-1} [\lambda_iTw_i]+}u_j(\lambda_j T)$ to $u_{j+1}(0) + \sum_{i=1}^j [\lambda_iTw_i]$, for $j=1,\dots,m+1$ (where we  have also set $w_0=0$ and $u_{m+2}=u_1$ for notational convenience). Note that, by the boundary conditions on $u_j$, one has $l(\gamma_j)\le  2\sqrt{m} $, so that
\begin{equation}\label{eq length lambdaj}
|\gamma_j'| \le  \frac{2\sqrt{m}}{K}.
\end{equation}

We construct the function $\tilde{u}: [0,\sum_{i=1}^{m+1} \lambda_i T + (m+1)K]=[0,T+(m+1)K] \to \Rn{m}$ as
\begin{equation}\label{eq def u tilde}
\tilde{u}(t)=\begin{cases}\displaystyle{\sum_{i=0}^{j-1} [\lambda_iTw_i]+}
		u_j\bigl(t - a_j\bigr)	&	a_j \le  t  \le b_j\\
															
		\gamma_j\left(t - b_j\right)	& b_j\le  t   \le c_j,
																									
		\end{cases}
\end{equation}
on the segment $[a_j,c_j]$ for $j\in\{1,\dots,m+1\}$.

Then we define $u_{TK}:[0,T] \to \Rn{m}$ as
\begin{equation}\label{eq def u}
u_{TK}(t)=\tilde{u}\left( \frac{t}{T} (T+(m+1)K) \right).
\end{equation}
Note that $u_{TK}(0)=u_1(0)$ and $u_{TK}(T)=u_1(0)+\sum_{i=1}^{m+1}[\lambda_iTw_i]$. This implies that the function
$$
u_{TK}(t)- {1\over T}\sum_{i=1}^{m+1}[\lambda_iTw_i] t
$$
can be extended $T$-periodically. We still denote by $u_{TK}$ the function resulting from this extension.
If we choose $T=T_\e\to+\infty$ and $K=K_\e\to+\infty$ such that  $K<\!<T$ then
 we have
\[
G(K,T):=
\frac{1}{T}(T+(m+1)K) \xrightarrow[K \to \infty]{} 1.
\]
so that, since ${1\over T}\sum_{i=1}^{m+1}[\lambda_iTw_i] \to w$,
the functions
$$
u_\e(t)= \e\,u_{TK}\Bigl({t\over\e}\Bigr)
$$
converge to the function $u(t)=wt$.

By the periodicity of $u_{TK}$, using (\ref{eq length lambdaj}), we can estimate
\begin{eqnarray*}
\limsup_{\e\to0} F_\e(u_\e)&=&
\limsup_{\e\to0} {1\over T+(m+1)K}\Biggl(\sum_{j=1}^{m+1} \lambda_i T \psi_{\lambda_iT}^{z_i}(w_i)\\
&& +\sum_{j=1}^{m+1} \Bigl(\int_0^K |\gamma'_j|^2dt
+{\delta^2\over\e^2}\int_0^K\Bigl|\nabla_y\varphi\Bigl({\gamma_j\over\e}\Bigr)\gamma'\Bigr|^2dt\Bigr)\Biggr)\\
&\le& \limsup_{\e\to0}
{1\over T}\Biggl(T\psihom(w)+ {C\over K}
+{\delta^2\over\e^2}\|\nabla\varphi\|^2_\infty{C\over K}\Biggr)
\\
&=& \psihom(w)+\|\nabla\varphi\|^2_\infty C\limsup_{\e\to0}
{\delta^2\over\e^2}{1\over TK}.
\end{eqnarray*}
If we choose, for example, $T=\delta^{1/3}/\eps$ and $K=\delta^{2/3}/\eps$, the conditions $T>\!> K>\!> 1$ are satisfied, and the last term in this estimate vanishes, as desired.

\begin{rmk}[multi-scale nature of recovery sequences]\rm
It is interesting to note that the construction of the recovery sequence departs from the
usual scaling of a fixed periodic function in the use of two scales $T$ and $K$, the first
necessary as customary to use the correctors given by the homogenization formula, and the second one, slower, but still tending to infinity, to construct the junctions between the functions defined by the correctors.
\end{rmk}

\section{Examples}\label{esempi}

In this section we include some examples in two and three dimension, that in particular show that
the limit may not be a quadratic energy. 

\begin{example} \label{sinx siny}\rm
We consider the constraint function: $\varphi:\Rn{2} \to \mathbb{R}$
\[
\varphi(x,y)= \sin (2\pi x) \sin(2 \pi y).
\]
We want to show that the homogenized function of the oscillating constrained problem associated to $\varphi$ is the squared $l_1$ norm: $\psi: \Rn{2} \to [0,+\infty)$,
 \[
 \psihom(w)=(|w_1|+|w_2|)^2
 \]

First of all note that $\varphi$ has {\em non-degenerate} level sets, in the sense of Definition~\ref{HyH}: we either have the level $\left\{\varphi=0 \right\}$, that has only one unbounded connected component, the lattice with vertices in $\mathbb{Z}^2$, or the level $\left\{\varphi= c\right\}$, with $c \neq 0$, that is composed of infinitely many bounded connected components. Moreover the set $\left\{\varphi=0 \right\}$ is made by a union of $\mathcal{C}^1$ sets, so that it can be proved that it satisfies the property in Definition \ref{esistenza raccordi}.  The
property in Definition \ref{hp geom} is proved by noting that to a curve $u$ lying in
$\{|\varphi|\le c\}$ we can associate a curve $u_0$ composed by its projection on $\{\varphi=0\}$ where this projection is uniquely defined, and piecewise-linear joints still lying in $\{\varphi=0\}$ on neighbourhoods of $\mathbb{Z}^2$ of radius $c$. The Dirichlet integral of $u_0$ is then not greater than $1+O{(}c)$ that of $u$, up to a small error due to the endpoints of $u$.

We apply Theorem \ref{thm gammalim} getting the homogenization formula (\ref{eq psihom}), that, in this particular case, is simpler: being $\left\{\varphi=0 \right\}$ the only level set with an unbounded connected component, we have
\[
\psihom(w)=\psihom^0(w).
\]

Now we want to prove that
\begin{equation}\label{upper}
\psihom^0(w)\le  (|w_1|+|w_2|)^2=:|w |^2_1.
\end{equation}

Given a general curve $u:[0,T] \to \Rn{2}$ satisfying the conditions $|u(0)|\le\sqrt{2}, \quad |u(T)-Tw|\le\sqrt{2}, \quad \varphi(u)=0$, using the change of variable $s=t/T$, the function $v(s)=u(sT)$ and Jensen's inequality, one has
\[
\int_0^T |u'|^2dt = \int_0^1|u'(sT)|^2 Tds = \frac{1}{T} \int_0^1 |v'(s)|^2 ds \ge  \frac{1}{T} \left( \int_0^1 |v'| ds \right)^2.
\]

Note that the last term depends on the length of the curve $v$, lying in the lattice $\varphi=0$,
and it is larger than or equal to $|T\tilde{w} |^2_{1}$, where
$$\tilde{w}={1\over T}(u(T)-u(0)).$$
Therefore, since $ |T\tilde{w}| \ge  |Tw|-2\sqrt{2}$,
\[
\int_0^T |u'|^2dt \ge  \frac{1}{T} |T\tilde{w} |^2_{1} \ge  \frac{1}{T} (|Tw |_{1}-2\sqrt{2})^2.
\]
Now we take the infimum on curves $u$ of this type and then the limit for $T \to \infty$, getting
\[
 \psihom^0(w) \ge  \lim_{T \to \infty} \frac{1}{T^2} (|Tw |_{1}-2\sqrt{2})^2=| w|_{1}^2.
\]
Since the value $|w|_{1}^2$ is asymptotically achieved on all test curves which are simple and have constant velocity, the equality is proved.
\end{example}

\begin{example}\label{esempio palle 2d}
\rm
In this second example we consider the oscillating constraint defined by $\varphi(x,y)= $ dist$((x,y), \mathbb{Z}^2)$; i.e., the distance from the points with integer coordinates. The corresponding homogenized function is the following norm, defined on the whole $\Rn{2}$:
\[
\psi(w)=\left(| w |_{\infty} \frac{\pi}{2}\right)^2=\left(\max\left\{ w_1,w_2\right\} \frac{\pi}{2}\right)^2.
\]

As in Example \ref{sinx siny}, the constraint $\varphi$ satisfies the properties in Definitions  \ref{hp geom}, \ref{esistenza raccordi} and \ref{HyH}, so that the homogenization formula (\ref{eq psihom}) can be used. Again the only level set with an unbounded connected component is the set $\{\varphi=1/2 \}$, hence
\[
\psihom(w)=\psihom^{1/2}(w).
\]

We can suppose without loss of generality that out test curves
have endpoints with one of the two coordinates an integer.
In this case note that  if
$u$ satisfies the boundary conditions of $\psihom^{1/2}$:
\[
|u(0)|\le\sqrt{2}, \quad |u(T)-Tw|\le\sqrt{2}, \quad \varphi(u)=1/2,
\]
then we have
\begin{equation}\label{prop l min}
\int_0^T |u'| dt \ge  |u(T)-u(0)|_{\infty} \frac{\pi}{2}
\end{equation}
as pictured in Fig.~\ref{esempio3b}.

\begin{figure}[!h]\centering
\includegraphics[width=0.5\columnwidth]{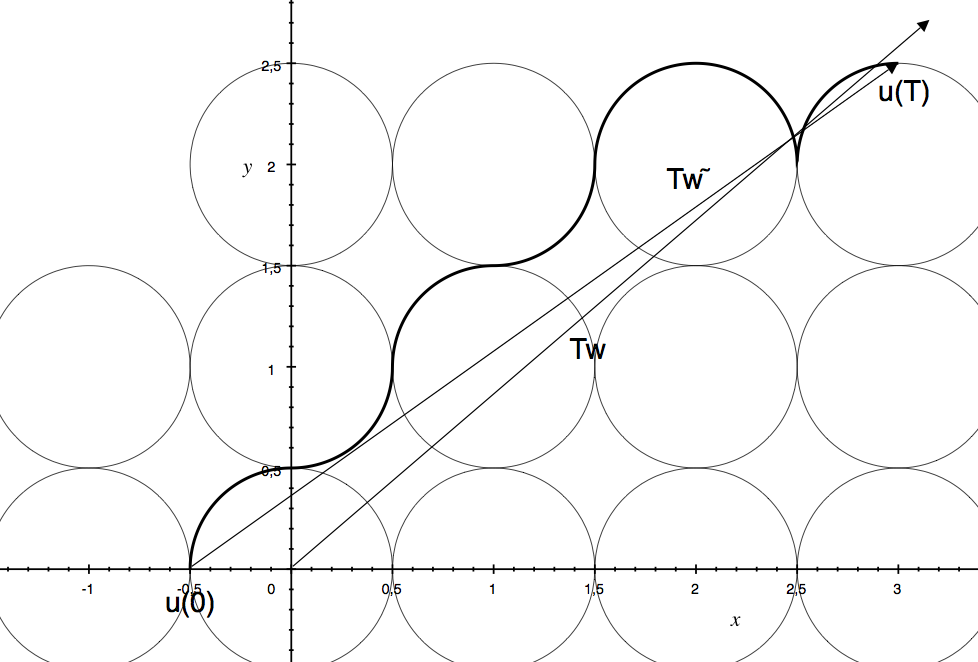}
\caption{An example of a constrained curve $u$ with minimum length.} \label{esempio3b}
\end{figure}

Take a general curve satisfying the conditions defining $\psi_T^{1/2}$: then, using change of variable $s=t/T$ and Jensen's inequality, with the same notation as in the previous example, we have
\[
\int_0^T |u'|^2 dt = \frac{1}{T} \int_0^1 |v'|^2 ds \ge  \frac{1}{T} \left( \int_0^1 |v'| ds\right)^2.
\]
Now by  (\ref{prop l min}), we know that the length of $v$ is larger than $|Tw|_{\infty} \pi/2$, so that
\[
\int_0^T |u'|^2 dt  \ge  \frac{1}{T} \left( |Tw|_{\infty} \frac{\pi}{2} \right)^2.
\]
This inequality holds for any curve $u$ satisfying the boundary conditions for $\psi_T^{1/2}(w)$ and for any $T>0$, so we can take the infimum over $u$ and the limit as $T \to \infty$, getting
\[
\psihom(w) \ge  \lim_{T \to \infty}  \frac{1}{T^2} \left( |Tw|_{\infty} \frac{\pi}{2} \right)^2 = \left( |w|_{\infty} \frac{\pi}{2} \right)^2
\]
The optimality of this estimate is shown by computing the energy of test curves as those pictured in
Fig.~\ref{esempio3b} with constant velocity.
%

%

%








\end{example}

\begin{example}\label{esempio 3d faces}
\rm
We consider an example for curves in $\Rn{3}$. In order to choose the constraint function $\varphi$ we can introduce the network
\[
\mathcal{L}=\{(x,y,z) \in \Rn{3}: x\in\mathbb{Z} \hbox{ or }  y\in\mathbb{Z} \hbox{ or } z\in  \mathbb{Z}\},
\]
shown in Figure \ref{fig:reticolo3d}, and define $\varphi(x,y,z)=$ dist$^2((x,y,z),\mathcal{L})$, or $$\varphi(x,y,z)=\min\{\hbox{dist}^2(x,\mathbb{Z}^2),
\hbox{dist}^2(y,\mathbb{Z}^2), \hbox{dist}^2(z,\mathbb{Z}^2)\}.$$

\begin{figure}[!h] \centering
\includegraphics[width=0.5\columnwidth]{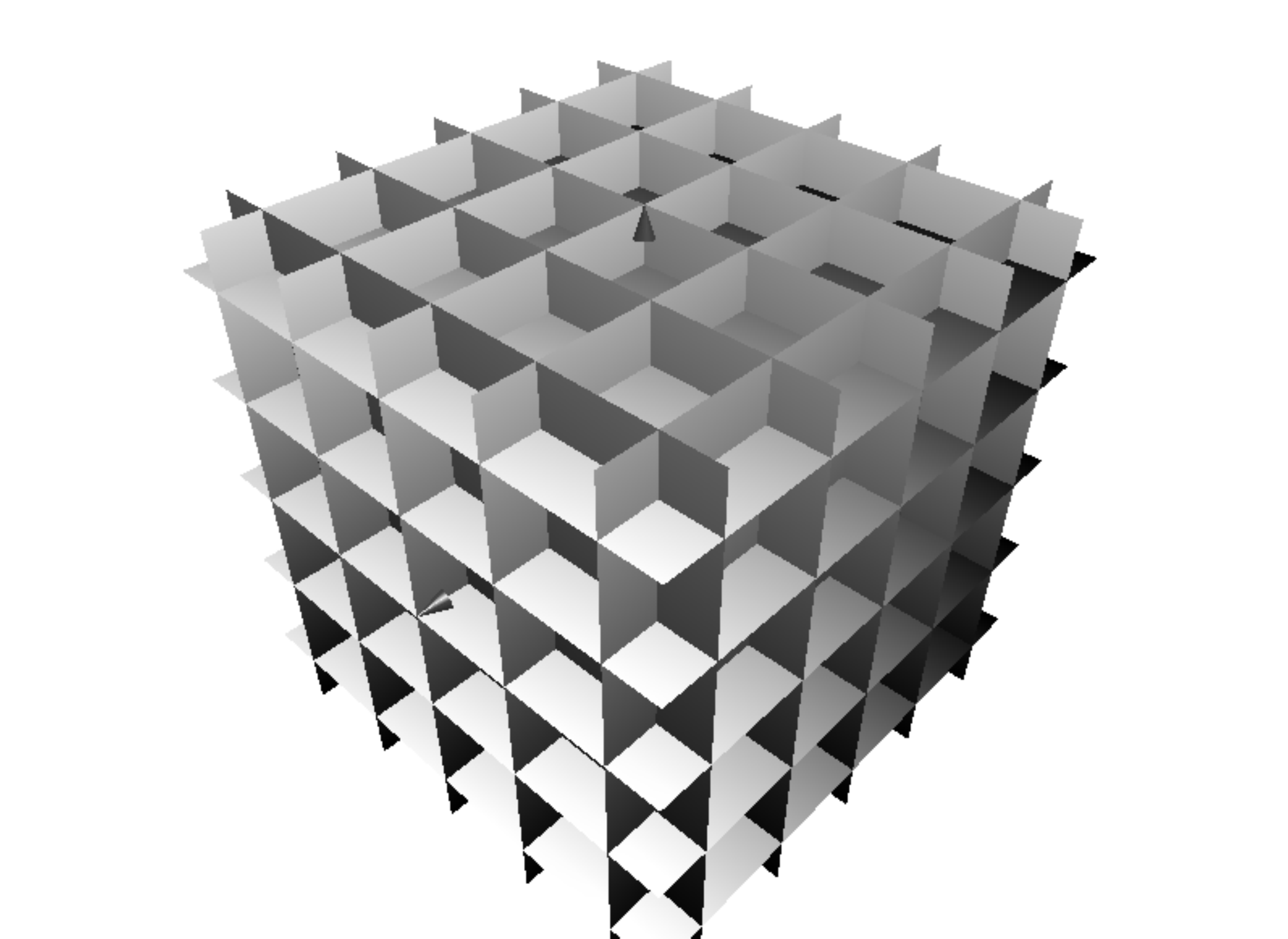}
\caption{The network $\mathcal{L}$.}\label{fig:reticolo3d}
\end{figure}

Actually, $\varphi$ satisfies the condition in Definition \ref{HyH} of non-degenerate levels, as there exists only one unbounded and connected level set of $\varphi$; i.e., the network $\mathcal{L}=\{\varphi=0 \}$. It also satisfies the hypothesis in Definition \ref{esistenza raccordi}, being a union of $\mathcal{C}^1$ sets. Hypothesis of Definition \ref{hp geom} is more tricky to verify: if we take $z \neq 0$ and $c<|z|$ then $\psi^{z,c}_T(w)=+\infty$; moreover it can be proved that for $c$ sufficiently small we have
\[
\psi_T^{0,c}(w) \ge  (1+o_c(1))\psi^0_T(w') - \frac{k(c)}{T},
\]
for any $w,w' \in \Rn{3}$, with $w'=w+o_T(1)$.

Therefore, by Theorem \ref{thm gammalim} we have
\[
\psihom(w) = \lim_{T \to \infty} \psi_T^0(w),
\]
where $\psi_T^0(w)$ measures the minimal length of a curve from $0$ to $Tw$, lying in the level set $\{\varphi=0 \}$. In order to compute such a metric we can consider the parallelepiped with edges $x=[w_1], y=[w_2], z=[w_3]$, so that its faces belong to the network $\mathcal{L}$. Note that the minimal curve joining $0$ and $(x,y,z)=([w_1],[w_2],[w_3])$ lying first in the plane $y=0$ and then in $x=[w_1]$ has length equal to
\[
\min_{0\le  t \le  z} f(t),\quad f(t)=\left(\sqrt{x^2 + t^2} + \sqrt{y^2+(z-t)^2} \right).
\]

The minimum of $f(t)$ is reached for $t=zx/(y+z)$ and it is
\[
l(x,y,z)=\sqrt{(|x|+|y|)^2+z^2}.
\]

Now we have to find the minimum of $l(x,y,z)$ on the permutations of $x,y,z$: observe that
\[
|y|\le  |z| \Leftrightarrow (|x|+|y|)^2+z^2 \le  (|x|+|z|)^2 + y^2
\]
\[
|x|\le  |y| \Leftrightarrow (|z|+|y|)^2+x^2 \le  (|x|+|y|)^2 + z^2,
\]
so that we have
\[
\psihom(w) = (\min\{|x|,|y|,|z| \})^2 + (|x|+|y|+|z|-\min\{|x|,|y|,|z| \})^2;
\]
that is, the euclidean norm for the minimal component of $w$ added to the $l_1$ norm of the other two components squared.
\end{example}

\begin{example}\label{esempio 3d balls}
\rm
As a second example of curves in $\Rn{3}$ let us consider the following constraint function
\[
\varphi(x,y,z)=\hbox{dist}^2((x,y,z),\mathbb{Z}^3).
\]
We observe that this is the natural generalization to $\Rn{3}$ of Example \ref{esempio palle 2d}. Hence, the {\em non-degenerate} level set $\mathcal{L}=\left\{ \varphi=1/4 \right\}$, pictured in Fig.~\ref{fig:reticolopalle3d}, has a unique unbounded connected component, that satisfies the hypotheses of Definitions \ref{esistenza raccordi} and \ref{hp geom}.

\begin{figure}[!h]\centering
\includegraphics[width=0.7\columnwidth]{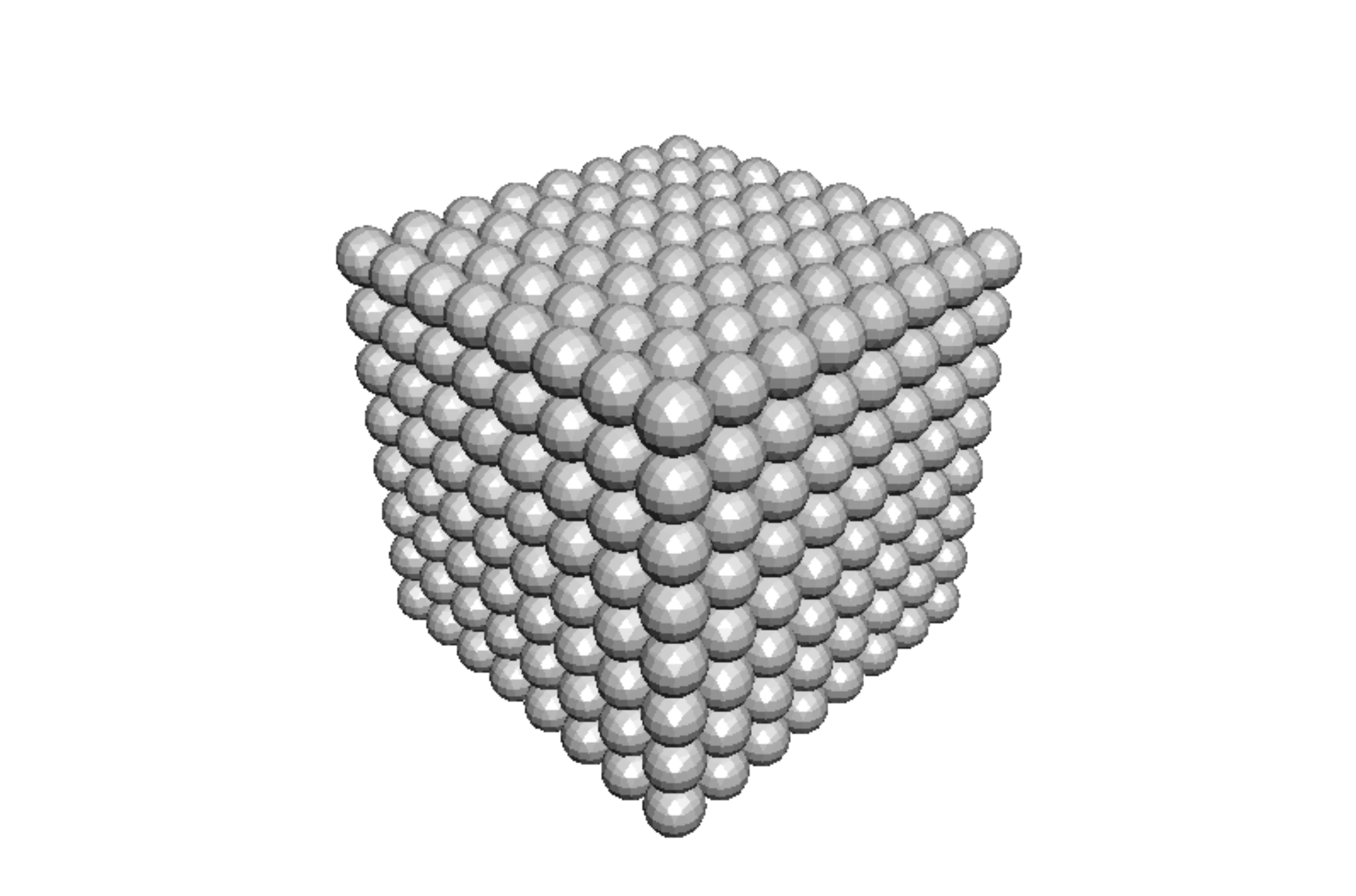}
\caption{The network $\mathcal{L}\subseteq \Rn{3}$.}\label{fig:reticolopalle3d}
\end{figure}

Note that $\mathcal{L}$ is not the only level set having that property; hence we are only able to find an upper bound for $\psihom(w)$. To this end we can use an argument similar to that of Example \ref{esempio 3d faces}. Consider the parallelepiped of edges $x=[w_1]$, $y=[w_2]$ and $z=[w_3]$. The length of a curve $u$, connecting $0$ and $w$, with the strict constraint $\varphi(u)=1/4$, is less or equal to that of a curve $v$, from $0$ to a point $([w_1],0,t)$, with $0 \le  t \le  [w_3]$, lying in the plane $y=0$, and from $([w_1],0,t)$ to $[w]$, in the plane $x=[w_1]$. We can exploit the result of Example \ref{esempio palle 2d} in these two planes, so that we have the minimal length
\[
l(v) = \min_{0 \le  t \le  z} \frac{\pi}{2}\left( |(x,t) |_\infty + |((z-t),y) |_\infty \right),
\]
where $|(\xi_1,\xi_2) |_\infty=\max(|\xi_1|,|\xi_2|)$ is the $l^\infty$norm of the vector $\xi \in \Rn{2}$.

Note that if $z < x+y$ then $|(x,t)|_\infty + |((z-t),y)|_\infty=|x|+|y|$, while if $z \ge  x+y$ then $|(x,t)|_\infty + |((z-t),y)|_\infty=|z|$. Therefore
\[
l(v) = \frac{\pi}{2} \min \left(\max\left( |x|+|y|, |z| \right), \max\left( |x|+|z|, |y| \right), \max\left( |z|+|y|, |x| \right) \right),
\]
%
which, after examining separately the cases $|x|\le  |y|$ and $|y|\le |z|$, can be written as follows:
\begin{eqnarray*}
l(v)&=& \frac{\pi}{2} \max\Bigl\{ \min(|x|,|y|,|z|),|(x,y,z)|_1- \min(|x|,|y|,|z|)  \Bigr\}\\
&=& \frac{\pi}{2} |\left( \min(|x|,|y|,|z|), |(x,y,z) |_1- \min(|x|,|y|,|z|) \right) |_{\infty}.
\end{eqnarray*}
Hence the upper bound for $\psihom$ reads
\begin{eqnarray*}
\psihom(w)& \le & \psihom^{1/4}(w)
\\
&\le & \left( \frac{\pi}{2} |\left( \min(|x|,|y|,|z|), |(x,y,z) |_1- \min(|x|,|y|,|z|) \right) |_{\infty} \right)^2.
\end{eqnarray*}

\end{example}

\section{An application: density of oscillating-constraint problems in Finsler metrics}\label{Chapter Finsler metrics}

By a symmetric Finsler metric in $\Rn{2}$, controlled from below by the Euclidean norm, we mean a function $\psi: \Rn{2} \to [0,+\infty]$ such that
\begin{itemize}
\item[i)] $\psi$ is $2$-homogeneous: $\psi(\lambda w)=\lambda^2 \psi(w)$ for all $w \in \Rn{2}$ and $\lambda \in \mathbb{R}$;
\item[ii)] $\psi$ is convex;
\item[iii)] $\psi(w)\ge  |w|^2$ for all $w \in \Rn{2}$;
\end{itemize}
Observe that from (i) one has $\psi (w)=\psi(-w)$ for all $w \in \Rn{2}$.

If $\psihom$ is an energy density derived from oscillating constraints as above, then it satisfies these conditions, i.e.,
the $\Gamma$-limit of an oscillating constraint problem, for curves with values in  $\Rn{m}$, is a symmetric Finsler metric. In this section we characterize metrics defined by an oscillating surface on $\Rn{3}$; i.e., we consider constraints given by  functions $\varphi: \Rn{2} \to \Rn{}$: more precisely we show that they are dense in Finsler metrics controlled from below by the Euclidean norm, with respect to $\Gamma$-convergence.

This result is close in spirit to that of \cite{B-B-F}, where it is proved that the closure of the metrics obtained by homogenization of Riemannian ones are all Finsler metrics.
Observe that, differently from the case treated in \cite{B-B-F}, we do not require the boundedness of $\psi$ from above; this allows us to treat cases of metrics whose domain is not the whole $\Rn{2}$.

By the hypothesis on $\psi$, we know that its domain, i.e. the set where $\psi$ is finite, has to be a convex cone in $\Rn{2}$, symmetric with respect to the origin and centered at $(0,0)$, hence, since $\varphi$ is convex, it is a subspace of $\Rn{2}$. So, if $\hbox{dom}(\psi) \neq \left\{0 \right\}$, we might have two different cases:

\begin{enumerate}
\item $\text{dom}\,\psi$ is a line through the origin; i.e., a subspace of dimension one ($\mathbb{R})$, so that $\psi$ is finite only in one direction and we have
\[
\sup_{|w|=1} \psi (w) = +\infty;
\]
\item $\text{dom}\,\psi$ is the whole $\Rn{2}$, so that we have
\[
\max_{|w|=1} \psi (w) = M < +\infty.
\]
\end{enumerate}
In the following functions $\varphi$ of type $1.$ and $2.$ will be called {\em degenerate} and {\em non-degenerate} Finsler metrics, respectively.

It is clear that this distinction cannot be extended to the situation of metrics defined on $\Rn{n}$, $n>2$, that, in general, will contain more cases.

In both cases, we want to prove that for any $\eta > 0$ and $\psi$ satisfying conditions (i)--(iii) there exists a periodic function $\varphi=\varphi_{\eta}: \Rn{2} \to \mathbb{R}$, defining the oscillating constraint and the corresponding functionals $F_{\eps}$, such that the homogenized function $\psi_{\eta}$ of the $\Gamma$-limit
\[
\gammalim{F_{\eps} (u)}= \int_0^1 \psi_{\eta}(u') dt
\]
satisfies the inequality
\begin{equation}\label{eq psi eta}
|\psi_{\eta}(w)-\psi(w)| \le  \eta |w|^2
\end{equation}
for all $w$ such that $\psi(w)<+\infty$ and $\psi_\eta(w)=+\infty$ otherwise.

As in Section \ref{sec intro}, we consider the functional $F_{\eps}$ in the unconstrained form, defined for curves with values in $\Rn{2}$: $\Feps: L^2([0,1];\Rn{2}) \to [0,+\infty]$

\begin{equation}\label{def Feps unconstrained Finsler}
F_{\eps}(u)
= \int_{\Omega}\Bigl( |u_{\eps}'|^2 + \Bigl(\frac{\delta}{\eps}\Bigr)^2 \left| \nabla_y \varphi (u_{\eps}) \ueps' \right|^2\Bigr) dx
\end{equation}
where $y\in\Rn{2}$ denotes the variable of $\varphi$.

\subsection{Degenerate Finsler metrics}\label{sup psi infinito}

We consider the case when the domain of the target metric $\psi$ is a vector space of dimension $1$. Note that if we required the weaker approximation condition
$$
\lim_{\eta\to0}\psi_\eta(w)=\psi(w)
$$
instead of (\ref{eq psi eta}) then this case could be seen as a limit of non-degenerate metrics. We present a construction which allows to directly obtain exactly $\psi$ as homogenized energy density.

It is not restrictive to assume that dom$(\psi)=\left\{ (w_1,w_2) \in \Rn{2}: w_1=0\right\}$ ,as all the other cases can be obtained from this one simply by a change of basis of $\Rn{2}$. Since the only $2$-homogeneous function in one variable is quadratic, then there exists a constant $k>0$ such that
$$\psi(w)=\begin{cases}k |w|^2 &\hbox{ if $w_1=0$}\cr
+\infty &\hbox{ if $w_1\neq0$}.
\end{cases}
$$

Now we construct a periodic function $\varphi$, defining the oscillating constraint, such that the density function of the $\Gamma$-limit of this problem is the quadratic function $\psi$. To that end, consider any $1$-periodic smooth function $g: \mathbb{R} \to \mathbb{R}$ and take $\varphi: \Rn{2} \to \mathbb{R}$, with $\varphi(x,y)=\sin(2 \pi(x-g(y)))$. The level sets $L^z_\varphi $ are defined by the equation $x-g(y)=c$, for a suitable $c \in \Rn{}$. We can represent them as the graph of the function $x=g(y)+c$, so that they are all the same graph up to a horizontal translation.

Note that all level sets of $\varphi$ are made by infinitely many disjoint unbounded  connected component, so that it does not satisfy the {\em non-degenerate} levels condition in Definition \ref{HyH}. However, in this special geometry Theorem \ref{thm gammalim} can be proved without that hypothesis, with
\[
\psihom(w)= \lim_{T\to+\infty} \psi_T^0(w)
\]
(where the value $0$ can be substituted by any vale in $[-1,1]=$ Im$(\varphi)$). In particular
the existence of the limit in this formula can be obtained
as in Lemma \ref{lemma ex limit} following the usual subaddditive argument, which is actually easier to implement in this case.
%

Given $k$, we have to find a function  $g$ such that $\psi_{\hom}=\psi$. To this end, it is enough to impose the identity for all $w$ with $|w|=1$, i.e. that
$\psi_{\hom}(w)=k$. Given $w$  with $|w|=1$ we have:
\begin{eqnarray*}
\psi_{\hom}(w)&=& \lim_{T \to \infty} \frac{1}{T} \min \Bigr\{ \int_0^T |u'|^2dt:  \varphi(u)=0,\\ &&\qquad |u(0)|\le\sqrt{2},|u(T)-Tw|\le\sqrt{2} \Bigl\}.
\end{eqnarray*}
By convexity, this minimum is achieved if $|u'|$ is constant; taking into account the form of the constraint we have
\[
\psi_{\hom}(w) = \left( \int_0^1 \sqrt{1+g'(s)} ds \right)^2.
\]
This shows that $g$ must satisfy
\[
\left( \int_0^1 \sqrt{1+g'(s)} ds \right)^2=k.
\]

\subsection{Non-degenerate Finsler metrics}
In this section we assume that $\psi$ is a Finsler metric satisfying dom$\,\psi=\Rn{2}$. Given $\eta>0$,
we want to construct a function $\varphi$, more precisely its unique connected level set, such that the density of the $\Gamma$-limit  $\psi_{\eta}=\psihom$ associated to $\varphi$, satisfies equation (\ref{eq psi eta}).


The function $\psi$ is characterized by
the convex and symmetric sub-level set
\[
C_{\psi}=\left\{ w \in \Rn{2}:\  \psi(w) \le  1 \right\}.
\]
Note that, by condition $\psi(w)\ge  |w|^2$, we have that $C_{\psi} \subseteq B_1(0)$, and, by the symmetry, $C_{\psi}$ is centered at the origin.

For every $N\in{\bf N}$, we can approximate this convex set with a polygon of $2N$ vertices, (which we still choose symmetric) $\pm V_1,\dots,\pm V_N$, whose directions are $\pm\nu_1, \dots,\pm\nu_N$. By density, we can also assume that these vertices are ‘‘rational'', in the sense that, for each $i=1,\dots,N$ there exists a point $z_i \in \mathbb{Z}^2$ and $t_i \in \mathbb{R}$, such that $t_i V_i = z_i$. For a pictorial description, we refer to Fig.~\ref{fig:Cpsi}
\begin{figure}[h!]\centering
\includegraphics[width=0.4\columnwidth]{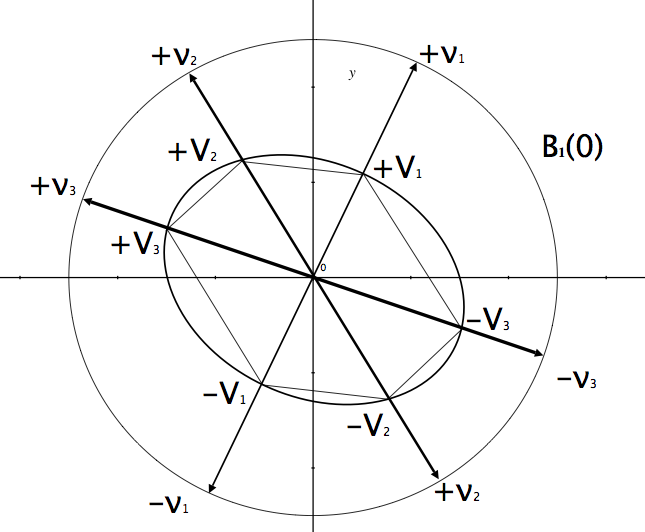}
\caption{The set $C_{\psi}$ and its polygonal approximation with directions $\nu_i$.}\label{fig:Cpsi}
\end{figure}

We will define $\varphi$ by constructing its unique connected level set $\{\varphi=0\}$, in such a way that the corresponding $\psihom$ has the polygon defined above as sublevel set. This will prove the approximation result.

From now on we directly assume that the target $\psi$ has polygonal level sets as above.
Let $Q=[0,a]^2$ be the periodicity square for all directions $\nu_i$; i.e., the square of edge the least common multiple $\tau=\text{lcm}(t_1,\dots,t_N)$.

 \begin{figure}[!h]\centering \includegraphics[width=0.5\columnwidth]{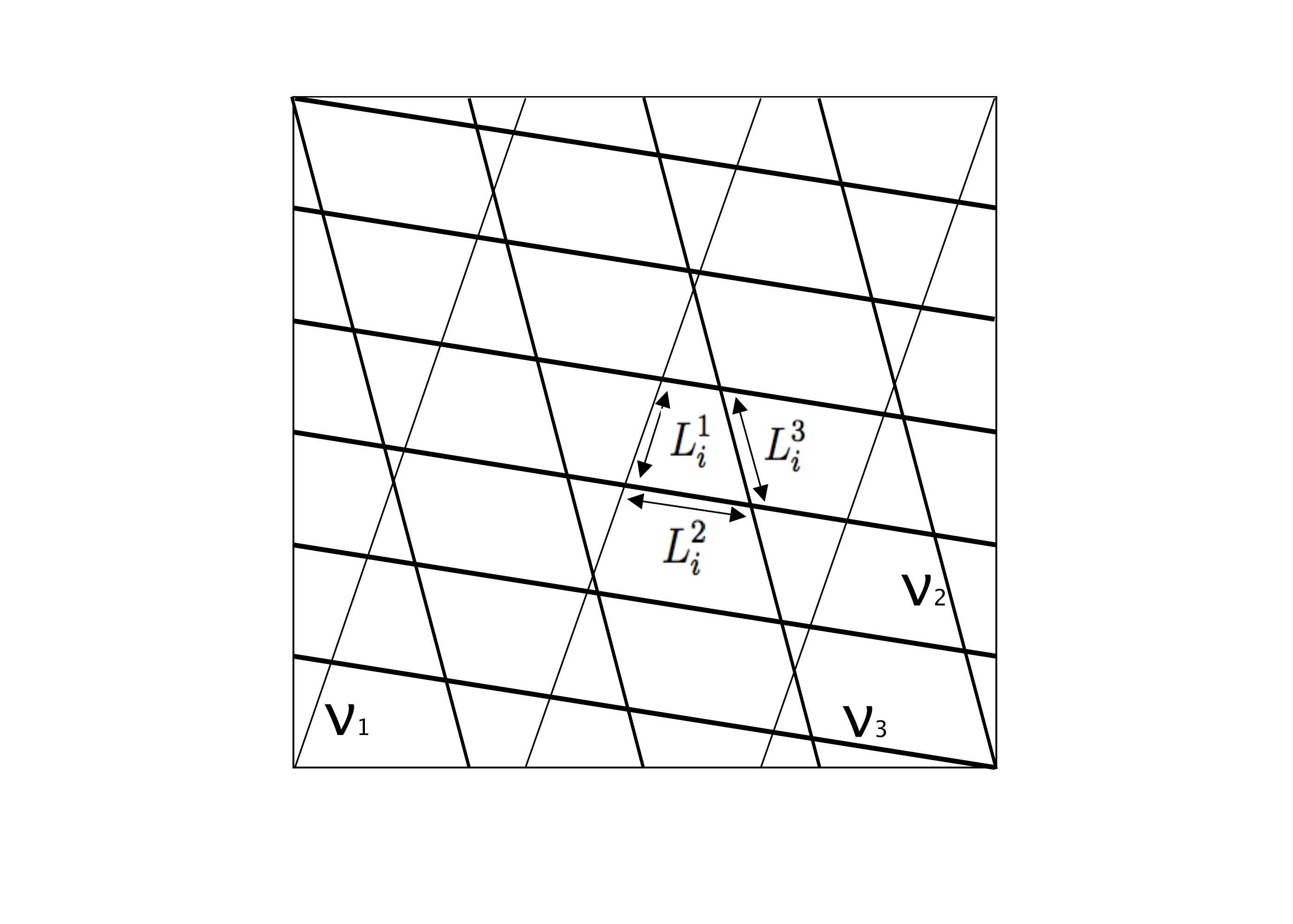}
\caption{The Q square with lines in directions $\nu_i$ and segments of length $L^j_i$.}\label{fig:Q}
\end{figure}
We begin our construction by considering the $Q$-periodic network of lines
$$
S=a\mathbb{Z}^2+\bigcup_{j=1}^N \nu_j\Rn{}
$$
This network defines a collection of segments $S^j_i\subset Q$,
 for $i=1,\dots,N$, $j=1,\dots,M_i$, with
$$
S^j_i\subset a\mathbb{Z}^2+ \nu_i\Rn{}
$$
and endpoints in
$$
\partial Q\cup \bigcup_{k\neq i}(a\mathbb{Z}^2+ \nu_k\Rn{})\cap
(a\mathbb{Z}^2+ \nu_i\Rn{}).
$$
We denote by $L^j_i$ the length of the segment $S^j_i$ (see Fig.~\ref{fig:Q}).
%
%
\begin{figure}[!h]\centering
\includegraphics[width=0.5\columnwidth]{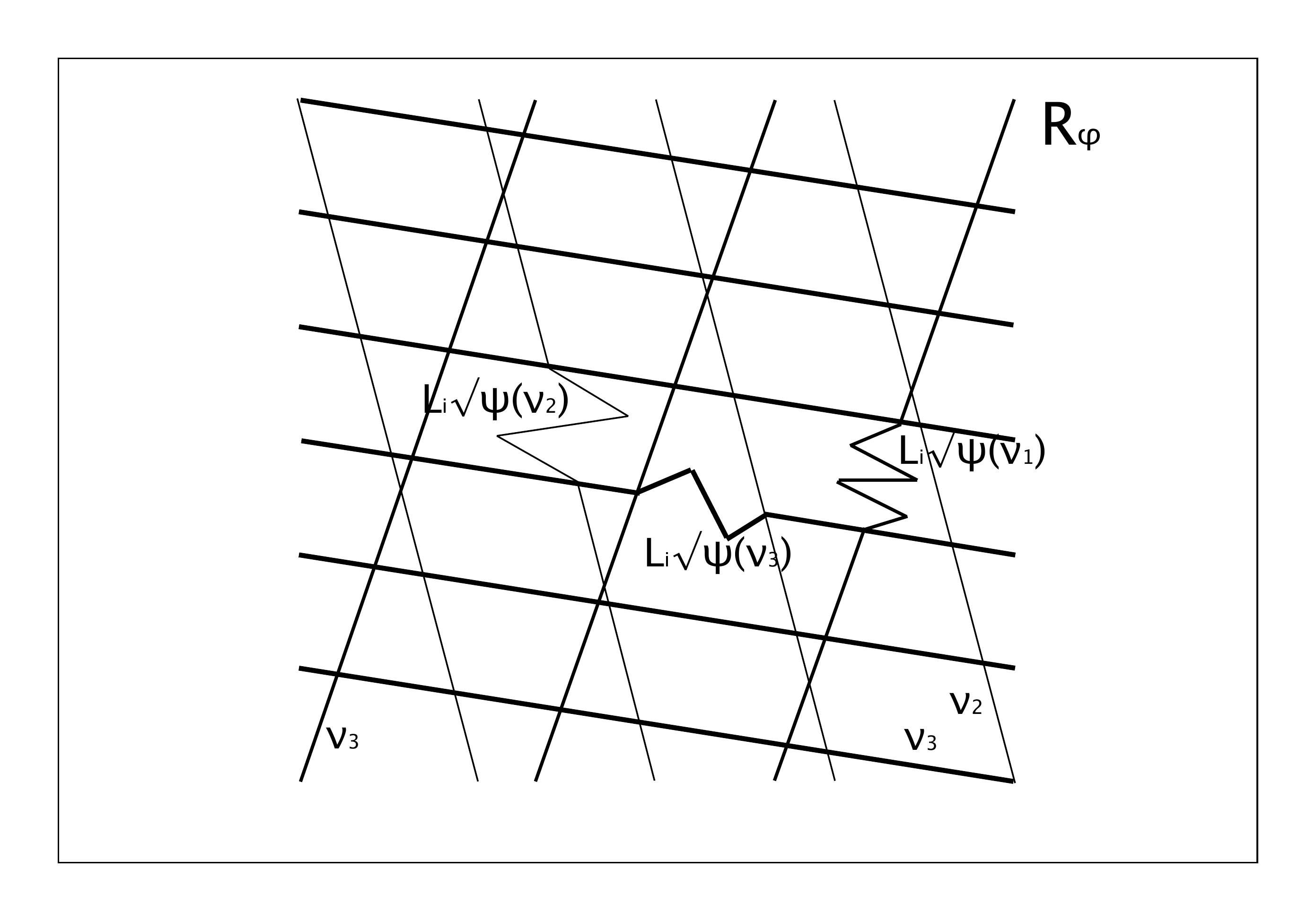}
\caption{The construction of the set $R_{\varphi}$, with three segments modified.}\label{fig:Rphi}
\end{figure}Note that if we took this network $S$ as the level set $\{\varphi=0\}$ then
we would have $\psihom(\nu_i)=1$ for all $i$. Since we want instead that
$$
\psihom(\nu_i)=\psihom\Bigl({V_i\over|V_i| }\Bigr)={1\over|V_i|^2 }\psihom({V_i})={1\over|V_i|^2 }
$$
we replace each segment  $S^j_i$, with a curve $\tilde S^j_i$ of length $L^j_i\sqrt{\psi(\nu_i)}$, as in Fig.~\ref{fig:Rphi}.

The set of all these lines $\tilde S^j_i$ inside $Q$, extended by periodicity, will represent the level set of $\varphi$ connecting $\Rn{2}$, i.e., by our assumption, the set $\left\{ \varphi=0 \right\}$, that we name $R_{\varphi}$. For example, we can take as $\varphi$ the squared distance from $R_{\varphi}$. Observe that, for such a constraint, the level sets are the union of $\mathcal{C}^1$ sets, so that hypothesis in Definition \ref{esistenza raccordi} is satisfied. The unique level made by a single unbounded  connected component is $\{\varphi=0 \}=R_{\varphi}$, all the other levels are made by infinitely many bounded connected components; i.e., the hypothesis in Definition \ref{HyH} is satisfied. Finally hypothesis in Definition \ref{hp geom} can be proved directly, arguing as in Example \ref{sinx siny}. Therefore, we may apply Theorem \ref{thm gammalim} to obtain a limit energy density $\psihom$.


Note that for any $i=1,\dots,N$
\begin{eqnarray*}
\psihom(\nu_i)&= &\lim_{T \to \infty} \frac{1}{T} \min \Bigl\{ \int_0^T |u'|^2dt,\\
&& |u(0)| \le  \sqrt{2}, |u(T)-T\nu_i| \le  \sqrt{2}, u \in R_{\varphi} \Bigr\}.
\end{eqnarray*}
We can test this formula with a function $u_T$, with $u_T(t)\in \bigcup_j\tilde S^j_i$ (i.e., taking its values in the deformation of a line in direction $\nu_i$) for all $t$ and with constant velocity.
%
Observe that, by the periodicity of $R_{\varphi}$, the distance covered by $u_T$ is at most
$
\sqrt{\psi(\nu_i)} \left[ T+1\right]$,
so that
\[
\psihom(\nu_i) \le  \lim_{T\to +\infty} \frac{1}{T} \int_0^T |u'|^2 dt = \lim_{T\to +\infty} \psi(\nu_i) \frac{1}{T^2} \left[ T+1\right]^2 = \psi(\nu_i).
\]
By convexity, we can extend the result to any $w \in \Rn{2}$, obtaining
\begin{equation}\label{le}
\psihom(w) \le  \psi(w)\qquad\hbox{ for all } w\in\Rn{2}.
\end{equation}

We now prove the converse inequality by estimating $\psi_T^0(w)$ from below. It is not restrictive to suppose that the test functions satisfy $v(0)=0$,
$v(T)=Tw\in a\mathbb{Z}^2$. Note moreover that by the convexity of $|v'|^2$ such a minimizer has constant velocity $|v'|=c$.
%
Denote by $\lambda_i$ the vector sum of all the segments $S^j_i$ in the image of $v$ in the direction $\nu_i$ (without the modification made by $\sqrt{\psi(\nu_i)}$).

Note that, a priori, $u$ may pass through some segments in the same direction $\nu_i$ but with opposite sign; in this case, will not consider the $\lambda_i$ related to these two portions of space. Therefore, in general, the distance covered by $v$, will be greater or equal then $\sum_{i=1}^N \lambda_i \sqrt{\psi(\nu_i)}$.

Note that, by construction, we have
\[
\sum_{i=1}^N \lambda_i \nu_i =Tw.
\]
Hence, for $T$ sufficiently large,
\begin{eqnarray*}
T \psihom(w) &=& \int_0^T |v'|^2 dt= T c^2 \\
&\ge & T \left(\frac{\sum_{i=1}^N \lambda_i \sqrt{\psi(\nu_i)}}{T}\right)^2
\\
&=&\frac{1}{T} \left( \sum_{i=1}^N \lambda_i \sqrt{\psi(\nu_i)} \right)^2 \frac{\left( \sum_{i=1}^N \lambda_i \right)^2}{\left( \sum_{i=1}^N \lambda_i \right)^2} \\
&=& \frac{\left( \sum_{i=1}^N \lambda_i \right)^2}{T} \left( \sum_{i=1}^N \frac{\lambda_i}{\sum_i \lambda_i} \sqrt{\psi(\nu_i)} \right)^2.
\end{eqnarray*}
The last term is a convex combination of $\sqrt{\psi(\nu_1)},\ldots, \sqrt{\psi(\nu_N)}$; then, by the convexity of $\sqrt{\psi}$ and  by the $2$-homogeneity of $\psi$, we get
\begin{eqnarray*}
T \psihom(w) &\ge &  \frac{\left( \sum_{i=1}^N \lambda_i \right)^2}{T} \left( \sqrt{\psi\left( \sum_{i=1}^N \frac{\lambda_i}{\sum_i \lambda_i} \nu_i \right)} \right)^2
\\
&=&\frac{\left( \sum_{i=1}^N \lambda_i \right)^2}{T} \cdot\frac{1}{\left( \sum_{i=1}^N \lambda_i \right)^2} \psi \left( \sum_{i=1}^N \lambda_i \nu_i \right)\\
&=&  \frac{1}{T} \psi(Tw)= T \psi(w).
\end{eqnarray*}
Therefore, by the above inequality and (\ref{le}), we get $
\psihom(w)=\psi(w)$,
as desired.

%

\medskip\noindent
{\bf Acknowledgments.} The first author acknowledges the hospitality of the Mathematical Institute in Oxford, where part of this work was written.

\end{document}